\crefname{hypothesis}{Hypothesis}{Hypotheses}
\title{Kaczmarz-type inner-iteration preconditioned flexible GMRES methods for consistent linear systems}
\author{Yi-Shu Du\thanks{School of Mathematical Sciences, Tongji University, N.O. 1239, Siping Road, Shanghai, 200092, China, and LIP, \'{E}cole Normale Sup\'{e}rieure de Lyon, INRIA, 46 All\'{e}e d'Italie, Lyon, 69364, France 
  (\email{duyishu@tongji.edu.cn}).}
\and Ken Hayami\thanks{National Institute of Informatics, and The Graduate University for Advanced Studies (SOKENDAI), 2-1-2 Hitotsubashi, Chiyoda-ku, Tokyo 100-0003, Japan (\email{hayami@nii.ac.jp}).}
\and Ning Zheng\thanks{Research Center for Statistical Machine Learning, The Institute of Statistical Mathematics, 10-3 Midori-cho, Tachikawa Tokyo 190-8562, Japan (\email{nzheng@ism.ac.jp}).
{{Current address: The Institute of Statistical Mathematics, 10-3 Midori-cho, Tachikawa, Tokyo 190-8562, Japan (\email{nzheng@ism.ac.jp}).}}}
\and Keiichi Morikuni\thanks{Faculty of Engineering, Information and Systems, University of Tsukuba, 1-1-1 Tennodai, Tsukuba, Ibaraki 305-8573, Japan (\email{morikuni@cs.tsukuba.ac.jp}).}
\and Jun-Feng Yin\thanks{School of Mathematical Sciences, Tongji University, N.O. 1239, Siping Road, Shanghai, 200092, China (\email{yinjf@tongji.edu.cn}).}}
\newcommand{\rank}{{\mbox{\rm rank}}}
\newcommand\blfootnote[1]{%
\begingroup
\renewcommand\thefootnote{}\footnote{#1}%
\addtocounter{footnote}{-1}%
\endgroup
}
\begin{document}
\nolinenumbers

\maketitle

% REQUIRED
\begin{abstract}
	We propose using greedy and randomized Kaczmarz inner-iterations as preconditioners for the right-precondition-\\ed flexible GMRES method to solve consistent linear systems, with a parameter tuning strategy for adjusting the number of inner iterations and the relaxation parameter.
	We also present theoretical justifications of the right-preconditioned flexible GMRES for solving consistent linear systems. Numerical experiments on overdetermined and underdetermined linear systems show that the proposed method is superior to the GMRES method preconditioned by NE-SOR inner iterations in terms of total CPU time. \blfootnote{\textbf{Funding}: This work was funded by JSPS KAKENHI Grant (No.~15K04768 and No.~20K14356), the National Natural Science Foundation of China (No.~11971354) and the China Scholarship Council (No.~201906260146).}
\end{abstract}

% REQUIRED
\begin{keywords}
	Kaczmarz method, randomized algorithm, linear system, overdetermined system, underdetermined system, least squares problem, iterative method, inner-outer iteration, preconditioner, GMRES, flexible GMRES
\end{keywords}

% REQUIRED
\begin{AMS}
	65F08, 65F10, 65F50, 15A06
\end{AMS}

\section{Introduction}
Consider solving consistent linear systems
\begin{equation}\label{l1}
A \boldsymbol{x} = \boldsymbol{b}, \quad \boldsymbol{b} \in {\mathcal{R}({A})},
\end{equation}
where $A \in {\mathbb{R}^{m \times n}}$ is not necessarily of full rank and ${\mathcal{R}({A})}$ is the range space of $A$.
In particular, consider the minimum Euclidean-norm solution
\begin{equation}\label{l2}
\operatorname*{min}_{\boldsymbol{x} \in {\mathbb{R}^{n}}} \| \boldsymbol{x} \| _ { 2 } 
\quad \text { s.t. } \quad A \boldsymbol{x} = \boldsymbol{b}, \quad \boldsymbol{b} \in {\mathcal{R}({A})}. 
\end{equation}
The problem~\eqref{l2} is equivalent to the normal equations of the second kind
\begin{equation}\label{n2}
A{A^\mathsf{T}}\boldsymbol{u} = \boldsymbol{b}, \quad  {\boldsymbol{x} = }{A^\mathsf{T}}\boldsymbol{u}, \quad \boldsymbol{b} \in {\mathcal{R}({A})},
\end{equation}
where $(\cdot)^\mathsf{T}$ denotes the transpose.

Direct methods for solving problem~\eqref{l2} or~\eqref{n2} are generally expensive when the coefficient matrix is large and sparse. A well-established iterative method for solving~\eqref{l2} is the (preconditioned) CGNE method~\cite{C55,S03}, which is mathematically equivalent to the (preconditioned) Conjugate Gradient (CG) method~\cite{HS52} applied to~\eqref{n2}. 
Another method is the (preconditioned) MRNE method~\cite{MH15, CMTH19}, which applies the (preconditioned) MINRES method~\cite{PS75} to~\eqref{n2}.   
Note that iterative methods may be slow to converge for ill-conditioned problems since the condition number of $AA^\mathsf{T}$ is the square of that of $A$, and preconditioning becomes necessary.
In~\cite{HYI10}, Hayami, Yin and Ito proposed a 
right-preconditioned generalized minimal residual (GMRES) method called the AB-GMRES method by applying GMRES to $\operatorname*{min}_{\boldsymbol{u} \in {\mathbb{R}^{m}}} \| \boldsymbol{b}-AB\boldsymbol{u} \| _ { 2 }$, where $B \in {\mathbb{R}^{n \times m}}$.

%On the other hand, Zhang and Oyanagi ~\cite{ZY91} proposed applying a Krylov subspace method called the Orthomin($k$) method ~\cite{V76} to linear systems \eqref{l1} instead of the normal equations \eqref{n1}. This approach is to introduce a mapping matrix $B \in {\mathbb{R}^{n \times m}}$, transform the problem \eqref{l1} to a linear system with a square matrix $AB \in {\mathbb{R}^{m \times m}}$, and then apply Orthomin($k$) to this nonsymmetric system. 

In order to accelerate the convergence of iterative methods and save the storage requirement, inner iterations can be applied as a preconditioner inside the Krylov subspace methods instead of applying preconditioning matrices explicitly. Such techniques are often called inner-outer iteration methods~\cite{S93}. Morikuni and Hayami~\cite{MH13,MH15} proposed a class of inner-iteration Krylov subspace methods by applying stationary inner iterations as implicit preconditioners, and showed their efficiency particularly for ill-conditioned and rank-deficient problems. (See also~\cite{CMTH19}.)

In AB-GMRES, common choices for stationary inner iterations are the 
normal error Gauss-Seidel (NE-GS) and normal error successive overrelaxation (NE-SOR) methods~\cite{BE79,S03}, which are also commonly referred to as the Kaczmarz~\cite{K37} or row action methods~\cite{A84,BL13,HY12,C81,G03,S40}. Since it was proposed in the 1930s, the Kaczmarz method has gained great theoretical development and plentiful practical applications~\cite{B03,C88,E40,E10,PCB12,T71}. Research on the Kaczmarz method was reignited in 2006 and 2009 when Strohmer and Vershymin~\cite{SV06,SV09} proposed the randomized Kaczmarz method with expected exponential rate of convergence. In~\cite{BW18}, Bai and Wu constructed a greedy randomized Kaczmarz method by proposing a more effective probability criterion. In~\cite{P18}, Popa summarized convergence rates for Kaczmarz-type methods, including greedy Kaczmarz~\cite{A84}, randomized Kaczmarz methods~\cite{SV09} and so on. 
For more literature on Kaczmarz-type methods, we refer the reader to~\cite{BWoc18,BWA18,BWO18,BW19}.
Numerical results show that these randomized or greedy Kaczmarz-type methods accelerate the original Kaczmarz method and reduce the required number of iterations and CPU time effectively. Inspired by this randomized framework, we replace the NE-SOR method by the greedy and randomized Kaczmarz methods in AB-GMRES preconditioned by stationary inner iterations.

A motivation for developing such a greedy/randomized inner-iteration preconditioning arises in applications where the operation on a row of a matrix is relatively expensive, such as in basis pursuit problems~\cite{CDM01,CMTH19}. 
We intend to reduce the total number of operations on rows by using greedy/randomized inner iterations instead of NE-SOR inner iterations. We mention related previous work~\cite{AMT10, MS14, RT08} on randomized preconditioners for least squares problems.

When the randomized or greedy Kaczmarz method is used as the inner iteration, the rows of $A$ are selected randomly or greedily in each iteration and thus the preconditioner is not fixed during the outer iteration. Therefore, we use the flexible GMRES method~\cite{S93} as the outer iteration and propose a new algorithm called the flexible AB-GMRES method with Kaczmarz-type inner iterations. 
%We are aware of existing work on randomized preconditioners, e.g.,~\cite{HM19,PQZ10}, but to the best of our knowledge, this study is the first to investigate the effectiveness of randomized and greedy Kaczmarz-type preconditioning for rectangular systems.
Theoretically, an optimality property of minimizing residual norm can be given under the framework of flexible GMRES. We also propose a parameter tuning procedure for adjusting the number of inner iterations and the relaxation parameter for the new method. Numerical results show that flexible AB-GMRES preconditioned by Kaczmarz-type methods outperform the AB-GMRES method preconditioned by NE-SOR iterations~\cite{MH15} in terms of total CPU time.
%To the best of our knowledge, this study is the first to investigate the effectiveness of the randomized iterative methods as preconditioning for solving rectangular linear systems. 

The organization of this paper is as follows. In section 2, we review the AB-GMRES method. In section 3, we present the flexible AB-GMRES method for consistent linear systems, and give an optimality property of the proposed method. In section 4, we propose a parameter tuning procedure for the new method and present numerical experiment results. In section 5, we conclude the paper.

\section{AB-GMRES method}
In this section, the inner-iteration preconditioned AB-GMRES method is briefly introduced.
Consider solving equation~\eqref{l2} using AB-GMRES. AB-GMRES corresponds to GMRES applied to $\mathop {\min }_{\boldsymbol{u} \in {\mathbb{R}^m}} {\left\| {\boldsymbol{b} - AB\boldsymbol{u}} \right\|_2}$ with $\boldsymbol{x} = B\boldsymbol{u}$ and works in an $m$-dimensional space~\cite{HYI10}. 
In order to achieve fast convergence of AB-GMRES and to avoid storing the preconditioner $B$, stationary inner iterations in combination with AB-GMRES were proposed in~\cite{MH15}. This algorithm can be described as follows. Here, $B^{(\ell)}$ denotes the preconditioning matrix for $\ell$ inner iterations.

\begin{algorithm}[H]
	\caption{AB-GMRES method preconditioned by inner iterations~\cite{MH15}}
	\label{ABPI}
	\begin{algorithmic}[1]
		\STATE Let $\boldsymbol{x}_0$ be the initial approximate solution and $\boldsymbol{r}_0=\boldsymbol{b}-A\boldsymbol{x}_0$.
		\STATE $\beta  = {\left\| {{{\boldsymbol{r}}_0}} \right\|_2}$, ${\boldsymbol{v}_1} = {{\boldsymbol{r}}_0}/\beta$
		\FOR{$k = 1,2, \ldots$ until convergence} 
		\STATE Apply $\ell$ iterations of a stationary iterative method to $AA^\mathsf{T}\boldsymbol{y}={\boldsymbol{v}_k}$, $\boldsymbol{z}=A^\mathsf{T}\boldsymbol{y}$ to obtain $\boldsymbol{z}_k=B^{(\ell)}{\boldsymbol{v}_k}$.
		\STATE ${\boldsymbol{w}_k} = A{\boldsymbol{z}_k}$
		\FOR{$i = 1,2, \ldots ,k,$}
		\STATE ${h_{i,k}} = {\boldsymbol{w}_k}^{\mathsf{T}}{\boldsymbol{v}_i}$, ${\boldsymbol{w}_k} = {\boldsymbol{w}_k} - {h_{i,k}}{\boldsymbol{v}_i}$
		\ENDFOR
		\STATE ${h_{k + 1,k}} = {\left\| {{\boldsymbol{w}_k}} \right\|_2}$, ${{\boldsymbol{v}_{k+1}}} = {\boldsymbol{w}_k}/{h_{k + 1,k}}$
		\ENDFOR
		\STATE ${\boldsymbol{y}_k} \!=\! \arg \mathop {\min }_{\boldsymbol{y} \in {\mathbb{R}^k}} {\left\| {\beta {\boldsymbol{e}_1} \!-\! {{\bar H}_k}\boldsymbol{y}} \right\|_2}$, ${\boldsymbol{u}_k} \!=\! \left[ {{\boldsymbol{v}_1},{\boldsymbol{v}_2}, \ldots ,{\boldsymbol{v}_k}} \right]{\boldsymbol{y}_k}$, where ${{\bar H}_k} \!=\! {\left\{ {{h_{ij}}} \right\}_{1 \leq i \leq k + 1;1 \leq j \leq k}}$ and ${\boldsymbol{e}_1}=[1,0,\ldots,0]^\mathsf{T}$
		\STATE Apply $\ell$ iterations of a stationary iterative method to $AA^\mathsf{T}\boldsymbol{y}=\boldsymbol{u}_k$, $\boldsymbol{z}=A^\mathsf{T}\boldsymbol{y}$ to obtain $\boldsymbol{z}_k=B^{(\ell)}\boldsymbol{u}_k$. \\
		\STATE $\boldsymbol{x}_k=\boldsymbol{x}_0+\boldsymbol{z}_k$
	\end{algorithmic}
\end{algorithm}

In the AB-GMRES preconditioned by inner iterations, one common choice for stationary inner iterations is the NE-SOR method, which is mathematically equivalent to the SOR method applied to the normal equations of the second kind~\cite{BE79,S03}.
More specifically, if we use ${\boldsymbol{\alpha} _i^\mathsf{T}}$ to represent the $i$th row of the matrix $A$, and $v_i$ to represent the $i$th entry of the vector $\boldsymbol{v}$, then the NE-SOR method for $AA^\mathsf{T}\boldsymbol{y}=\boldsymbol{v}, \boldsymbol{z}=A^\mathsf{T}\boldsymbol{y}$ can be described as follows.

\begin{algorithm}[H]
	\caption{NE-SOR method~\cite{S03}}
	\label{NE-SOR}
	\begin{algorithmic}[1]
		\STATE Let $\boldsymbol{z}^{(0)}$ be the initial approximate solution and $\omega \in {\mathbb{R}}$ be the relaxation parameter.  \\
		\FOR{$p = 0,1,2, \ldots, \ell-1$} 
		\FOR{$i = 1,2, \ldots, m$}
		\STATE ${\boldsymbol{z}^{(p)}} = {\boldsymbol{z}^{(p)}} + \omega \frac{{{v_i} - \boldsymbol{\alpha} _i^\mathsf{T}{\boldsymbol{z}^{(p)}}}}{{\left\| {{\boldsymbol{\alpha} _i}} \right\|_2^2}}{\boldsymbol{\alpha} _i}$
		\ENDFOR
		\STATE ${\boldsymbol{z}^{(p + 1)}} = {\boldsymbol{z}^{(p)}}$		
		\ENDFOR
	\end{algorithmic}
\end{algorithm}

The Kaczmarz method~\cite{K37} is equivalent to Algorithm~\ref{NE-SOR} with $\omega=1$.
In fact, the iteration scheme of NE-GS (NE-SOR) is exactly the same as that of the Kaczmarz method (relaxed Kaczmarz method)~\cite{K37}. 
The relaxed Kaczmarz (NE-SOR) method is one of the most efficient row action methods. For $\omega=1$, the method cycles through the rows of the linear system and forms each iterate by orthogonally projecting the current point onto the hyperplane $\boldsymbol{\alpha} _{i}^\mathsf{T}\boldsymbol{z}^{(p+1)}=v_{i}$ formed by the active row, and all the $m$ equations in the linear system are swept through consecutively in $m$ iterations.

The convergence theorem of AB-GMRES preconditioned by NE-SOR
is precisely restated below.

\begin{theorem}~\cite[Theorem~5.6]{MH15}.
	AB-GMRES preconditioned by NE-SOR inner iterations with $0<\omega<2$, determines the minimum-norm solution of $A\boldsymbol{x}=\boldsymbol{b}$ without breakdown for all $\boldsymbol{b} \in \mathcal{R}(A)$ and for all ${\boldsymbol{x}_0} \in \mathcal{R}(A^\mathsf{T})$.
\end{theorem}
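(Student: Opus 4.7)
The plan is to reduce the claim to the classical convergence theory of GMRES applied to the fixed preconditioned operator $AB^{(\ell)}$. First I would make the NE-SOR preconditioner explicit: splitting $AA^\mathsf{T} = M_\omega - N_\omega$ in the usual SOR manner (with $M_\omega = D/\omega + L$, where $D$ and $L$ are the diagonal and strictly lower triangular parts of $AA^\mathsf{T}$), one sweep of Algorithm~\ref{NE-SOR} on $AA^\mathsf{T}y = v$ reads $y^{(p+1)} = H y^{(p)} + M_\omega^{-1} v$ with $H = M_\omega^{-1} N_\omega$. Running $\ell$ sweeps from $y^{(0)}=0$ and forming $z = A^\mathsf{T} y^{(\ell)}$ yields the explicit representation
\[
  B^{(\ell)} = A^\mathsf{T}\!\left(\sum_{j=0}^{\ell-1}H^{j}\right)\!M_\omega^{-1},
\]
from which $\mathcal{R}(B^{(\ell)}) \subseteq \mathcal{R}(A^\mathsf{T})$ is immediate.

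The central step is to verify the range-symmetry condition $\mathcal{R}(AB^{(\ell)}) = \mathcal{R}((AB^{(\ell)})^\mathsf{T})$, which (by the standard theory of GMRES on singular systems) is sufficient for GMRES to determine a solution of $(AB^{(\ell)})u=b$ without breakdown whenever $b\in\mathcal{R}(AB^{(\ell)})$. For $0<\omega<2$ the SOR iteration matrix $H$ is strictly contractive on $\mathcal{R}(AA^\mathsf{T})=\mathcal{R}(A)$ by Ostrowski--Reich, while it acts as the identity on $\mathcal{N}(AA^\mathsf{T})=\mathcal{N}(A^\mathsf{T})$; in particular $I-H^{\ell}$ is invertible on $\mathcal{R}(A)$. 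Using this together with the factorization above I would conclude $\mathcal{R}(AB^{(\ell)})=\mathcal{R}(AA^\mathsf{T})=\mathcal{R}(A)$, and the mirror argument for the transpose yields $\mathcal{R}((AB^{(\ell)})^\mathsf{T})=\mathcal{R}(A)$ as well.

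Given range-symmetry, I would invoke the breakdown-free convergence theorem for GMRES on singular consistent systems: since by hypothesis $b\in\mathcal{R}(A)=\mathcal{R}(AB^{(\ell)})$, there is a finite iterate $u_k$ with $(AB^{(\ell)})u_k = b - Ax_0$, so that the correction $x_k = x_0 + B^{(\ell)}u_k$ produced in lines 11--13 of Algorithm~\ref{ABPI} satisfies $Ax_k = b$. To promote $x_k$ to the minimum-norm solution I would then note that $B^{(\ell)}u_k \in \mathcal{R}(A^\mathsf{T})$ by the opening step, and by hypothesis $x_0 \in \mathcal{R}(A^\mathsf{T})$, so $x_k \in \mathcal{R}(A^\mathsf{T})$; combined with $Ax_k=b$ this characterizes $x_k$ uniquely as the minimum-norm solution of the consistent system.

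The principal obstacle is the rank-deficient case in the second step. One must argue carefully that $H$ leaves the decomposition $\mathbb{R}^m = \mathcal{R}(A)\oplus\mathcal{N}(A^\mathsf{T})$ invariant in the right sense, that the restriction of $H$ to $\mathcal{R}(A)$ really is contractive (which is where $0<\omega<2$ enters), and then track how the outer factor $A^\mathsf{T}$ in $B^{(\ell)}$ interacts with these invariant subspaces in order to pin down $\mathcal{R}(AB^{(\ell)})$ as exactly $\mathcal{R}(A)$ rather than merely a subset. Everything downstream---the non-breakdown of the Arnoldi process in Algorithm~\ref{ABPI} and the minimum-norm property---then follows from the fixed-preconditioner GMRES theory applied to the range-symmetric operator $AB^{(\ell)}$.
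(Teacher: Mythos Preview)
The paper does not supply a proof of this statement; it merely restates \cite[Theorem~5.6]{MH15} and moves on, so there is nothing in the present paper to compare your proposal against. Your outline is, in broad strokes, the standard route to such a result: make $B^{(\ell)}$ explicit via the SOR splitting, verify $\mathcal{R}(B^{(\ell)})\subseteq\mathcal{R}(A^\mathsf{T})$ and the range-symmetry $\mathcal{R}(AB^{(\ell)})=\mathcal{R}((AB^{(\ell)})^\mathsf{T})=\mathcal{R}(A)$, invoke the GMRES theory for range-symmetric singular systems to obtain breakdown-free convergence, and finally use $x_k\in\mathcal{R}(A^\mathsf{T})$ to conclude the minimum-norm property.

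One technical caution on your second step: writing $H=I-M_\omega^{-1}AA^\mathsf{T}$, it is immediate that $H$ fixes $\mathcal{N}(A^\mathsf{T})$ pointwise, but it is \emph{not} obvious that $H$ maps $\mathcal{R}(A)$ into itself in the rank-deficient case, since $M_\omega^{-1}$ need not preserve $\mathcal{R}(A)$. Hence your decomposition argument as written may not go through directly. The usual fix is either to work with the companion iteration on the $z$-side (the Kaczmarz recursion in $\mathbb{R}^n$, whose iteration matrix does map $\mathcal{R}(A^\mathsf{T})$ to itself), or to bypass invariance and argue spectrally: show that every eigenvalue of $H$ on $\mathbb{R}^m/\mathcal{N}(A^\mathsf{T})$ has modulus less than one for $0<\omega<2$ (semiconvergence of SOR for the semidefinite matrix $AA^\mathsf{T}$), which suffices to conclude $\mathcal{R}(AB^{(\ell)})=\mathcal{R}(AA^\mathsf{T})=\mathcal{R}(A)$. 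Your downstream steps are sound once this is in place.
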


The condition ${\boldsymbol{x}_0} \in {\mathbb{R}^{n}}$ in ~\cite[Theorem{\color{black}s}~5.5 and 5.6]{MH15} should be ${\boldsymbol{x}_0} \in {\mathcal{R}({A^\mathsf{T}})}.$ This follows from ~\cite[Theorem~5.2]{MH15}.

%{\color{black}{This theorem corrects \cite[Theorem~5.6]{MH15}, in which the condition ${x_0} \in \mathcal{R}(A^\mathsf{T})$ was ${x_0} \in {\mathbb{R}^{n}}.$}}

\subsection{Flexible AB-GMRES method}
We adopt the flexible preconditioning framework proposed in FGMRES~\cite{S93} to AB-GMRES, and consider using Kaczmarz-type inner iterations in it.

\subsubsection{Outer-iteration algorithm}
It is well known that the preconditioning matrix needs to be fixed in the preconditioned GMRES method for all the outer iterations.
In fact, in order to keep the preconditioner ${B^{(\ell)}}$ in Algorithm~\ref{ABPI} fixed, the number of inner iterations in AB-GMRES should not be changed for each outer iteration.
However, if we were to adopt the randomized or greedy algorithm as the inner-iteration preconditioner in AB-GMRES, the preconditioning matrix for each outer iteration may change even though the number of inner iterations for each outer iteration is fixed.
In~\cite{S93}, Saad presented a variant of the GMRES algorithm called flexible GMRES (FGMRES) for solving square linear systems $A\boldsymbol{x}=\boldsymbol{b}$, which allows the preconditioning matrix to change for each outer iteration. A similar variant of AB-GMRES can be described as follows, where $B^{(\ell_k)}$ denotes the preconditioning matrix for $\ell_k$ inner iterations for the $k$th outer iteration.
\begin{algorithm}[H]
	\caption{Flexible AB-GMRES (F-AB-GMRES) method}
	\label{Flexible GMRES(FGMRES)}
	\begin{algorithmic}[1]
		\STATE Let $\boldsymbol{x}_0$ be the initial approximate solution and $\boldsymbol{r}_0=\boldsymbol{b}-A\boldsymbol{x}_0$.
		\STATE $\beta  = {\left\| {{\boldsymbol{r}_0}} \right\|_2}$, ${\boldsymbol{v}_1} = {\boldsymbol{r}_0}/\beta$
		\FOR{$k = 1,2, \ldots$ until convergence} 
		\STATE $\boldsymbol{z}_k=B^{({\ell_k})}{\boldsymbol{v}_k}$, ${\boldsymbol{w}_k} = A\boldsymbol{z}_k$
		\FOR{$i = 1,2, \ldots ,k,$}
		\STATE ${h_{i,k}} = ({\boldsymbol{w}_k },{\boldsymbol{v}_i})$, ${\boldsymbol{w}_k } = {\boldsymbol{w}_k } - {h_{i,k}}{\boldsymbol{v}_i}$
		\ENDFOR
		\STATE ${h_{k + 1,k}} = {\left\| {{\boldsymbol{w}_k }} \right\|_2}$, ${\boldsymbol{v}_{k + 1}} = {\boldsymbol{w}_k }/{h_{k + 1,k}}$
		\STATE Define ${Z_k} = \left[ {{\boldsymbol{z}_1}, {\boldsymbol{z}_2}, \ldots ,{\boldsymbol{z}_k}} \right]$.
		\ENDFOR
		\STATE ${\boldsymbol{y}_k} = \arg \mathop {\min }_{\boldsymbol{y} \in {\mathbb{R}^k}} {\left\| {\beta {\boldsymbol{e}_1} - {{\bar H}_k}\boldsymbol{y}} \right\|_2}$, where ${{\bar H}_k} = {\left\{ {{h_{ij}}} \right\}_{1 \leq i \leq k + 1;1 \leq j \leq k}}$
		\STATE ${\boldsymbol{x}_k} = {\boldsymbol{x}_0} + Z_k{\boldsymbol{y}_k}$
	\end{algorithmic}
\end{algorithm}
Here, $Z_k$ denotes the $n \times k$ matrix with column vectors $\boldsymbol{z}_1$, $\boldsymbol{z}_2$, $\ldots$, $\boldsymbol{z}_k$. For later use, let $H_k$ denote the $k \times k$ matrix obtained from ${{\bar H}_k}$ by deleting its last row, and $V_k$ denote the $m \times k$ matrix with column vectors $\boldsymbol{v}_1$, $\boldsymbol{v}_2$, $\ldots$, $\boldsymbol{v}_k$.

In the following, we will propose using Kaczmarz-type algorithms as $B^{(\ell_k)}$ in Algorithm~\ref{Flexible GMRES(FGMRES)}.

\subsubsection{Kaczmarz-type inner-iteration algorithms}
The (relaxed) Kaczmarz method is given below.
We will call it the Kaczmarz method for short in the following.
\begin{algorithm}[H] 
	\caption{Kaczmarz (K) method~\cite{K37}} \label{KMM}
	\begin{algorithmic}[1]
		\STATE Let ${\boldsymbol{z}^{(0)}}$ be the initial approximate solution. 
		\FOR{$p = 0,1,2, \ldots, \ell-1$}
		\STATE ${i_p} = (p\bmod m)+1$
		\STATE ${\boldsymbol{z}^{(p + 1)}} = {\boldsymbol{z}^{(p)}} + \omega \frac{{{v_{{i_p}}} - \boldsymbol{\alpha} _{{i_p}}^\mathsf{T}{\boldsymbol{z}^{(p)}}}}{{\left\| {{\boldsymbol{\alpha} _{{i_p}}}} \right\|_2^2}}{\boldsymbol{\alpha} _{{i_p}}}$	
		\ENDFOR
	\end{algorithmic}
\end{algorithm}

Instead of using the rows of the coefficient matrix $A$ consecutively, Ansorge~\cite{A84} proposed selecting the $i_p$th row corresponding to the residual component with maximum absolute value for the $p$th iteration. We will call this method the greedy Kaczmarz method, or GK method for short.
See also~\cite{S40} for earlier work.
Another approach is to choose the $i_p$th row randomly for the $p$th iteration~\cite{SV09}, which is called the randomized Kaczmarz (RK) method.
Finally, Bai and Wu~\cite{BW18} combined these two ideas to propose a more effective probability criterion, which aims to
diminish entries of the residual vector with relatively large absolute value at each iteration. We will call the corresponding algorithm the greedy randomized Kaczmarz (GRK) method. These algorithms are called Kaczmarz-type methods. It can be proved that GK, RK and GRK methods converge to the minimum-norm solution whether the system is overdetermined or underdetermined and the coefficient matrix has full rank or is rank deficient~\cite{P18}. The algorithms of GK, RK and GRK are given below.

%We give the algorithms of the methods for the inner-iteration preconditioning which works on the linear system $Ax=b$. 

\begin{algorithm}[H]
	\caption{Greedy Kaczmarz (GK) method~\cite{A84}}
	\label{The Greedy Kaczmarz Method}
	\begin{algorithmic}[1]
		\STATE Let ${\boldsymbol{z}^{(0)}}$ be the initial approximate solution. 
		\FOR{$p = 0,1,2, \ldots, \ell-1$}
		\STATE Select ${i_p} \in \left\{ {1,2, \ldots m} \right\}$ according to ${i_p} = \arg \mathop {\max }\limits_i  {{{| {v_i} - \boldsymbol{\alpha} _i^\mathsf{T}{\boldsymbol{z}^{(p)}} |}}}$.		
		\STATE ${\boldsymbol{z}^{(p + 1)}} = {\boldsymbol{z}^{(p)}} + \omega \frac{{{v_{{i_p}}} - \boldsymbol{\alpha} _{{i_p}}^\mathsf{T}{\boldsymbol{z}^{(p)}}}}{{\left\| {{\boldsymbol{\alpha} _{{i_p}}}} \right\|_2^2}}{\boldsymbol{\alpha} _{{i_p}}}$	
		\ENDFOR
	\end{algorithmic}
\end{algorithm}

\begin{algorithm}[H] 
	\caption{Randomized Kaczmarz (RK) method~\cite{SV09}} \label{RKMM}
	\begin{algorithmic}[1]
		\STATE Let ${\boldsymbol{z}^{(0)}}$ be the initial approximate solution. 
		\FOR{$p = 0,1,2, \ldots, \ell-1$}
		\STATE Select ${i_p} \in \left\{ {1,2, \ldots m} \right\}$ with probability $\Pr (\text{row} = {i_p}) = \frac{{\left\| {{\boldsymbol{\alpha} _{{i_p}}}} \right\|_2^{2}}}{{\left\| A \right\|_\text{F}^{2}}}$.	
		\STATE ${\boldsymbol{z}^{(p + 1)}} = {\boldsymbol{z}^{(p)}} + \omega \frac{{{v_{{i_p}}} - \boldsymbol{\alpha} _{{i_p}}^\mathsf{T}{\boldsymbol{z}^{(p)}}}}{{\left\| {{\boldsymbol{\alpha} _{{i_p}}}} \right\|_2^2}}{\boldsymbol{\alpha} _{{i_p}}}$	
		\ENDFOR
	\end{algorithmic}
\end{algorithm}

\begin{algorithm}[H] 
	\caption{Greedy Randomized Kaczmarz (GRK) method~\cite{BW18}} \label{GRKMM}
	\begin{algorithmic}[1]
		\STATE Let ${\boldsymbol{z}^{(0)}}$ be the initial approximate solution. 
		\FOR{$p = 0,1,2, \ldots, \ell-1$}
		\STATE ${\varepsilon _p} = \frac{1}{2}\left( {\frac{1}{{\left\| {\boldsymbol{v} - A{\boldsymbol{z}^{(p)}}} \right\|_2^2}}\mathop {\max }\limits_{1 \le {i_p} \le m} \left\{ {\frac{{{{\left| {{v_{{i_p}}} - \boldsymbol{\alpha} _{{i_p}}^\mathsf{T}{\boldsymbol{z}^{(p)}}} \right|}^2}}}{{\left\| {{\boldsymbol{\alpha} _{{i_p}}}} \right\|_2^{\rm{2}}}}} \right\} + \frac{1}{{\left\| A \right\|_\mathsf{F}^{\rm{2}}}}} \right)$
		\STATE Determine the index set of positive integers\\
		$${U_p} = \left\{ { {{i_p}} : {{{| {{v_{{i_p}}} - \boldsymbol{\alpha} _{{i_p}}^\mathsf{T}{\boldsymbol{z}^{(p)}}} |}^2}} \ge {\varepsilon _p}\| {\boldsymbol{v} - A{\boldsymbol{z}^{(p)}}} \|_2^2}{{\| {{\boldsymbol{\alpha} _{{i_p}}}} \|_2^{2}}} \right\}.$$
		\STATE Compute the $i$th entry $\tilde s_i^{(p)}$ of the vector ${{\boldsymbol{ \tilde s}}^{(p)}}$ according to\\
		\begin{equation*}
		\tilde s_i^{(p)}=
		\begin{cases}
		{v_i} - \boldsymbol{\alpha} _i^\mathsf{T}{\boldsymbol{z}^{(p)}},& \text{if} \quad i \in {U_p},\\
		0,& \text{otherwise.}
		\end{cases}
		\end{equation*}
		\STATE Select ${i_p} \in {U_p}$ with probability $\Pr (\text{row} = {i_p}) = \frac{{{{\left| {\tilde s_{i_p}^{(p)}} \right|}^2}}}{{\left\| {{{ \boldsymbol{\tilde s}}^{(p)}}} \right\|_2^{\text{2}}}}$.
		\STATE ${\boldsymbol{z}^{(p + 1)}} = {\boldsymbol{z}^{(p)}} + \omega \frac{{{v_{{i_p}}} - \boldsymbol{\alpha} _{{i_p}}^\mathsf{T}{\boldsymbol{z}^{(p)}}}}{{\left\| {{\boldsymbol{\alpha} _{{i_p}}}} \right\|_2^2}}{\boldsymbol{\alpha} _{{i_p}}}$	
		\ENDFOR	
	\end{algorithmic}
\end{algorithm}

In Algorithms~\ref{RKMM} and~\ref{GRKMM}, Pr($\text{row}=i$) represents the probability of selecting the $i$th row of the matrix $A$ as the working row of this iteration.

We remark that $\boldsymbol{s}=\boldsymbol{v}-A\boldsymbol{z}$ needs to be calculated at each inner iteration for GK and GRK methods. This additional computational work cannot be ignored. On the other hand, we may update the residual vector $\boldsymbol{s}$ recursively as follows~\cite{BW18}:
\begin{align}\label{r}
{\boldsymbol{s}^{(p+1)}} &= \boldsymbol{v} - A{\boldsymbol{z}^{(p+1)}}\nonumber\\
&=\boldsymbol{v} - A\left( {{\boldsymbol{z}^{(p)}} + {\color{black}{\omega}}\frac{{{v_{{i_p}}} - \boldsymbol{\alpha} _{{i_p}}^\mathsf{T}{\boldsymbol{z}^{(p)}}}}{{\left\| {{\boldsymbol{\alpha} _{{i_p}}}} \right\|_2^2}}\boldsymbol{\alpha} _{{i_p}}} \right)\nonumber\\
&= \boldsymbol{v} - A{\boldsymbol{z}^{(p)}} - {\color{black}{\omega}}\frac{{s_{{i_p}}^{\left( p \right)}}}{{\left\| {{\boldsymbol{\alpha} _{{i_p}}}} \right\|_2^{\rm{2}}}}A\boldsymbol{\alpha} _{{i_p}}\nonumber\\
&={\boldsymbol{s}^{\left( p \right)}}-{\color{black}{\omega}}\frac{{s_{{i_p}}^{\left( p \right)}}}{{\left\| {{\boldsymbol{\alpha} _{{i_p}}}} \right\|_2^{\rm{2}}}}{C_{({i_p})}}.
\end{align}
Here, ${C_{({i_p})}}$ is the ${i_p}$th column of $C=AA^\mathsf{T}$.
Hence, if the matrix product $AA^\mathsf{T}$ is computed and stored once in the beginning, the computational work can be reduced, assuming that the total number of inner iterations is more than the number of rows of $A$. (See Appendix.) This condition was satisfied in all our numerical experiments.

%In fact, there are some simple randomized strategies to approximately compute $AA^\mathsf{T}$ efficiently in $\mathcal{O}(m^2c)$, where $c$ is a user-specified positive integer that determines the number of sampled columns of $A$~\cite{HI15}.   

\section{Flexible AB-GMRES preconditioned by Kaczmarz-type methods as inner iterations}
In FGMRES,  we can change the preconditioner for each outer iteration.
Hence, consider using $\ell_{k}$ iterations of a Kaczmarz-type method as the preconditioner for each outer iteration of the flexible AB-GMRES (F-AB-GMRES). We denote the preconditioning matrix given by the $\ell_{k}$ inner iterations by ${B^{(\ell_{k})}}$. The algorithm is given as follows.
\begin{algorithm}[H]
	\caption{Flexible AB-GMRES preconditioned by Kaczmarz-type methods}
	\label{AGPK}
	\begin{algorithmic}[1]
		\STATE Let $\boldsymbol{x}_0$ be the initial approximate solution and $\boldsymbol{r}_0=\boldsymbol{b}-A\boldsymbol{x}_0$.
		\STATE $\beta  = {\left\| {{{\boldsymbol{r}}_0}} \right\|_2}$, ${\boldsymbol{v}_1} = {{\boldsymbol{r}}_0}/\beta$
		\FOR{$k = 1,2, \ldots$ until convergence} 
		\STATE Apply $\ell_{k}$ iterations of a Kaczmarz-type method to $A\boldsymbol{z}\!=\!\boldsymbol{v}_k$ to obtain $\boldsymbol{z}_k\!=\!{B^{(\ell_{k})}}\boldsymbol{v}_k$, where $\ell_{k}$$=\min \{\ell_\text{max}, \ell\}$, and $\ell_\text{max}$ is the maximum number of inner iterations allowed, and $\ell$ is the smallest $\ell$ such that\\
		$${\left\| {\boldsymbol{v}_k - A{B^{(\ell)}}\boldsymbol{v}_k} \right\|_2} \le {\color{black}{\eta}} {\left\| \boldsymbol{v}_k \right\|_2}.$$
		\STATE ${\boldsymbol{w}_k} = A{\boldsymbol{z}_k}$ 
		\FOR{$i = 1,2, \ldots ,k,$}
		\STATE ${h_{i,k}} = {\boldsymbol{w}_k}^\mathsf{T}{\boldsymbol{v}_i}$, ${\boldsymbol{w}_k} = {\boldsymbol{w}_k} - {h_{i,k}}{\boldsymbol{v}_i}$
		\ENDFOR
		\STATE ${h_{k + 1,k}} = {\left\| {{\boldsymbol{w}_k}} \right\|_2}$, ${\boldsymbol{v}_{k + 1}} = {\boldsymbol{w}_k}/{h_{k + 1,k}}$
		\ENDFOR
		\STATE ${\boldsymbol{y}_k} \!=\! \arg \mathop {\min }_{\boldsymbol{y} \in {\mathbb{R}^k}} {\left\| {\beta {\boldsymbol{e}_1} - {{\bar H}_k}\boldsymbol{y}} \right\|_2}$, ${\boldsymbol{u}_k} \!=\! \left[ {{\boldsymbol{z}_1},{\boldsymbol{z}_2}, \ldots ,{\boldsymbol{z}_k}} \right]{\boldsymbol{y}_k}$, where ${{\bar H}_k} \!=\! {\left\{ {{h_{ij}}} \right\}_{1 \leq i \leq k + 1;1 \leq j \leq k}}$
		\STATE $\boldsymbol{x}_k=\boldsymbol{x}_0+\boldsymbol{u}_k$
	\end{algorithmic}
\end{algorithm}
Here, ${{\bar H}_k} = \left\{ {{h_{i,j}}} \right\} \in {\mathbb{R}^{(k + 1) \times k}}$. Since the number of inner iterations in each outer iteration does not have to be fixed, we proposed a new inner iterations stopping criterion that varies with the outer iteration to accelerate the convergence, which is given in line 4 of Algorithm~\ref{AGPK}.
Here, $\eta<1$ is a parameter. 
%Since we have adopted the flexible GMRES framework, the stopping criterion of inner iteration does not need to be fixed. Therefore, in the first few outer iterations, we do not need an inner iteration with sufficient precision in order to speed up the whole algorithm. This idea comes from equation (5.1) in~\cite{BGN03}.

Note that when the Kaczmarz method is used as inner iterations, the number of inner iterations $\ell_k$ for each outer iteration $k$ does not have to be fixed to $\ell_\text{max}$, and may differ for each outer iteration, which makes it different from NE-SOR inner iterations applied to AB-GMRES.
Note also that \eqref{r} may be used to speed up the residual evaluation in step 4 of Algorithm~3.1 for the Kaczmarz-type methods.

The least squares problem in line 11 is solved as in the GMRES method~\cite{S86,S03}.
An optimality property similar to GMRES is given as in~\cite[Proposition~2.1]{S93} for FGMRES.

\begin{theorem}\label{the1}
	The approximate solution $\boldsymbol{x}_k$ obtained at the $k$th iteration of F-AB-GMRES minimizes the residual norm ${\left\| {\boldsymbol{b} - A{\boldsymbol{x}_k}} \right\|_2}$ over ${\boldsymbol{x}_0} + \mathrm{span}\left\{ {\boldsymbol{z}_1},{\boldsymbol{z}_2}, \ldots ,{\boldsymbol{z}_k} \right\}$.
\end{theorem}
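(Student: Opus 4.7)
The plan is to follow the same template used by Saad for Proposition~2.1 in the original FGMRES paper, adapting it to the flexible AB-GMRES setting. The key algebraic object is the flexible Arnoldi-type relation
\begin{equation*}
  A Z_k = V_{k+1}\bar{H}_k,
\end{equation*}
where $Z_k=[z_1,z_2,\ldots,z_k]$ and $V_{k+1}=[v_1,v_2,\ldots,v_{k+1}]$. I would first verify this identity by reading off lines~5--9 of Algorithm~\ref{AGPK}: for each $k$ the vector $w_k = A z_k$ is orthogonalised against $v_1,\ldots,v_k$ with Gram--Schmidt coefficients $h_{i,k}$, and then normalised to produce $v_{k+1}$ with $h_{k+1,k}=\|w_k\|_2$. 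Collecting these $k$ column identities in matrix form yields the displayed relation.

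Next I would record that the Arnoldi-style orthogonalisation guarantees that $V_{k+1}$ has orthonormal columns, regardless of how the $z_j$'s were generated; in particular, the fact that the preconditioner $B^{(\ell_k)}$ varies with the outer index $k$, and that the inner iterations include random or greedy choices of rows, is immaterial to the orthogonality of the $v_i$'s. With orthonormality in hand, any candidate $x\in x_0+\mathrm{Span}\{z_1,\ldots,z_k\}$ can be written as $x=x_0+Z_k y$ with $y\in\mathbb{R}^k$, and its residual becomes
\begin{equation*}
  b-Ax = r_0 - A Z_k y = \beta v_1 - V_{k+1}\bar{H}_k y = V_{k+1}\bigl(\beta e_1 - \bar{H}_k y\bigr),
\end{equation*}
using $r_0=\beta v_1$ and $v_1=V_{k+1}e_1$. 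Taking Euclidean norms and invoking $V_{k+1}^{\mathsf T}V_{k+1}=I_{k+1}$ gives
\begin{equation*}
  \|b-Ax\|_2 = \|\beta e_1 - \bar{H}_k y\|_2.
\end{equation*}

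Finally, minimising the right-hand side over $y\in\mathbb{R}^k$ is exactly the least-squares step in line~11 of Algorithm~\ref{AGPK}; the minimiser $y_k$ defines $u_k=Z_k y_k$ and hence $x_k=x_0+u_k$, which therefore attains the infimum of $\|b-Ax\|_2$ over $x_0+\mathrm{Span}\{z_1,\ldots,z_k\}$. I do not expect any real obstacle: the only subtlety is emphasising that flexibility of the preconditioner does not spoil the Arnoldi relation, since $Z_k$ is simply treated as a generic basis produced column-by-column and $V_{k+1}$ inherits orthonormality from the standard modified Gram--Schmidt recurrence. The argument never uses any special structure of the Kaczmarz-type inner iterations, which is precisely why the optimality statement is identical in form to that of standard FGMRES.
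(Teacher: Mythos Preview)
Your proposal is correct and follows essentially the same approach as the paper: establish the flexible Arnoldi relation $AZ_k=V_{k+1}\bar H_k$ from the Gram--Schmidt steps of the algorithm, expand the residual as $V_{k+1}(\beta e_1-\bar H_k y)$, and use orthonormality of the columns of $V_{k+1}$ to reduce the residual minimisation to the small least-squares problem solved in line~11. Your additional remarks about the preconditioner's flexibility being immaterial to the orthogonality of the $v_i$'s are helpful commentary but not part of the paper's proof itself.
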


\begin{theorem}
	If ${\boldsymbol{z}^{\left( 0 \right)}},{\boldsymbol{x}_0} \in \mathcal{R}({A^\mathsf{T}})$, when F-AB-GMRES preconditioned by one of the above Kaczmarz-type methods gives a solution of $A\boldsymbol{x}=\boldsymbol{b}$, it is the minimum Euclidean-norm solution.
\end{theorem}

\begin{proof}
	In K, GK, RK and GRK, if the initial iterate ${\boldsymbol{z}^{\left( 0 \right)}} \in \mathcal{R}({A^\mathsf{T}})$, then ${\boldsymbol{z}^{\left( p \right)}} \in \mathcal{R}({A^\mathsf{T}})$. Hence, if ${\boldsymbol{x}_0} \in \mathcal{R}({A^\mathsf{T}})$, the F-AB-GMRES iterate ${\boldsymbol{x}_k} \in \mathcal{R}({A^\mathsf{T}})$, since ${{\color{black}{\boldsymbol{u}}}_k} \in \text{span}\{ {\boldsymbol{z}_1}, {\boldsymbol{z}_2}, \ldots ,{\boldsymbol{z}_k}\}$. Therefore, when $\boldsymbol{x}_k$ is a solution of $A\boldsymbol{x}=\boldsymbol{b}$, it is the minimum Euclidean-norm solution since ${\boldsymbol{x}_k} \in \mathcal{R}({A^\mathsf{T}}) = \mathcal{N}{(A)^ \bot }$.
\end{proof}

Next, we consider the possibility of breakdown in F-AB-GMRES preconditioned by Kaczmarz-type methods as inner iterations.
A breakdown occurs when the vector $\boldsymbol{v}_{k+1}$ cannot be computed in line 9 of Algorithm~\ref{AGPK} because $h_{k+1, k} = 0$. For AB-GMRES with $B$ satisfying the convergence conditions~\cite[Theorem~5.2]{MH15},~\cite[Corollary~3.8]{HYI10}, this is not a problem because when this happens, the approximate solution $\boldsymbol{x}_k$ satisfies $A\boldsymbol{x}_k=\boldsymbol{b}$. 
The situation for F-AB-GMRES is slightly different, as in~\cite[Proposition~2.2]{S93} for FGMRES.

\begin{theorem}\label{the2}
	Assume that $\beta=\left\|\boldsymbol{r}_{0}\right\|_{2} \neq 0$ and that $k-1$ steps of F-AB-GMRES have been successfully performed, i.e., that $h_{i+1, i} \neq 0$ for $i < k$. In addition, assume that the matrix $H_k$ is nonsingular. Then $\boldsymbol{x}_k$ is a solution of $A\boldsymbol{x}=\boldsymbol{b}$ if and only if $h_{k+1, k} = 0$.
\end{theorem}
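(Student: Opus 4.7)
The plan is to run the argument in parallel with the analogous FGMRES result, leveraging Theorem~\ref{the1} to reduce everything to a statement about the small Hessenberg least-squares problem, and then to exploit the Hessenberg structure together with the no-breakdown hypothesis $h_{i+1,i}\neq 0$ for $i<k$.

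First I would recall from the proof of Theorem~\ref{the1} that $b-Ax_k = V_{k+1}(\beta e_1-\bar H_k y_k)$, and since the columns of $V_{k+1}$ are orthonormal, $\|b-Ax_k\|_2 = \|\beta e_1-\bar H_k y_k\|_2$. Hence $x_k$ solves $Ax=b$ if and only if $\beta e_1-\bar H_k y_k=0$, where $e_1\in\mathbb{R}^{k+1}$. The "if" direction is then easy: if $h_{k+1,k}=0$, the last row of $\bar H_k$ vanishes by the Hessenberg structure, so the vector $y_k := \beta H_k^{-1}e_1^{(k)}$ (with $e_1^{(k)}\in\mathbb{R}^k$) satisfies $\bar H_k y_k = \beta e_1$, showing that the minimum of $\|\beta e_1-\bar H_k y\|_2$ is zero and hence $Ax_k=b$.

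For the "only if" direction, suppose $Ax_k=b$, so that $\beta e_1-\bar H_k y_k=0$. Splitting off the last component gives $h_{k+1,k}\,(y_k)_k=0$, so it suffices to prove $(y_k)_k\neq 0$. The top $k$ components give $H_k y_k=\beta e_1^{(k)}$, hence $(y_k)_k = \beta\,[H_k^{-1}]_{k,1}$. By Cramer's rule,
\[
[H_k^{-1}]_{k,1} = \frac{(-1)^{k+1}\det M_{1,k}}{\det H_k},
\]
where $M_{1,k}$ is obtained from $H_k$ by deleting the first row and the last column. Because $H_k$ is upper Hessenberg, $M_{1,k}$ is lower triangular with diagonal entries $h_{2,1},h_{3,2},\ldots,h_{k,k-1}$, so $\det M_{1,k}=\prod_{i=1}^{k-1}h_{i+1,i}$. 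By the no-breakdown assumption this product is nonzero, and $\det H_k\neq 0$ by hypothesis, so $[H_k^{-1}]_{k,1}\neq 0$. Combined with $\beta\neq 0$, this yields $(y_k)_k\neq 0$, and therefore $h_{k+1,k}=0$.

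The one step that requires genuine care is the computation of $(y_k)_k$: one must see that the Hessenberg pattern plus the successful prior subdiagonal entries force the $(k,1)$ entry of $H_k^{-1}$ to be nonzero, and that both assumptions in the theorem (nonsingularity of $H_k$ and $h_{i+1,i}\neq 0$ for $i<k$) are used here. Everything else is a direct transcription of the residual identity from Theorem~\ref{the1}.
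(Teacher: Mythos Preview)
Your argument is correct and follows essentially the same route as the paper. The only difference is in the ``only if'' step: the paper argues by contradiction via backward substitution in $H_k y_k=\beta e_1$ (if $(y_k)_k=0$, the Hessenberg structure together with $h_{i+1,i}\neq 0$ for $i<k$ forces all components of $y_k$ to vanish, whence $\beta=0$), whereas you compute $(y_k)_k$ directly through Cramer's rule. One small slip: the minor $M_{1,k}$ is \emph{upper} triangular, not lower triangular, since its $(i,j)$ entry is $h_{i+1,j}$, which vanishes for $i>j$; but its diagonal is indeed $h_{2,1},h_{3,2},\ldots,h_{k,k-1}$, so your determinant formula and the conclusion are unaffected.
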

%\begin{proof}
%If $h_{k+1, k} = 0$, then $A Z_{k}=V_{k} H_{k}$, and we have
%$$J_{k}(y)=\left\|\beta v_{1}-A Z_{k} y \right\|_{2}=\left\|\beta v_{1}-V_{k} H_{k} y \right\|_{2}=\left\|\beta e_{1}-H_{k} y \right\|_{2},$$
%where $e_1$ is the first column of $\mathrm{I_k}$.
%
%If $H_k$ is nonsingular, then the above function is minimized for $y=y_k=H_{k}^{-1}\left(\beta e_{1}\right)$ and
%the corresponding minimum norm is zero, that is to say, $x_k$ is a solution of $Ax=b$.
%
%On the other hand, if $x_k$ satisfies $Ax_k=b$, then from~\eqref{z} and~\eqref{b1}, we have
%\begin{equation}\label{r2}
%	0=b-A x_{k}=V_{k}\left(\beta e_{1}-H_{k} y_{k}\right)- w_k e_{k}^{\mathsf{T}} y_{k}.
%\end{equation}
%We show that $h_{k + 1,k}=0$ by contradiction. Suppose that $h_{k + 1,k} \neq 0$ holds. If the last component of $y_k$ is equal to zero $e_{k}^{\mathsf{T}} y_{k}=0$, then a simple backward substitution for the system $H_{k} y_{k}=\beta e_{1}$, starting from the last equation, will show that all components of $y_k$ are zero. This would imply that $\beta=0$, which contradicts the assumption. Hence, $h_{k + 1,k}=0$ holds.
%\end{proof}

The only difference of this result from that of AB-GMRES is that the additional assumption that $H_k$ is nonsingular must be made since it is no longer implied by the algorithm.
In fact, the following holds

\begin{theorem}
	Assume $\rank A = n$, $\rank Z_k =k$, and $h_{k+1,k} = 0.$ Then, $H_k$ is nonsingular.
\end{theorem}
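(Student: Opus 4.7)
The plan is to exploit equation~\eqref{z} under the assumption $h_{k+1,k}=0$. Setting that entry to zero in the second line of~\eqref{z} collapses the identity to $AZ_k = V_k H_k$, and the remainder of the argument is a direct rank comparison of the two sides of this equation, using the two hypotheses $\rank A = n$ and $\rank Z_k = k$.

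First I would show that the left-hand side $AZ_k$ has rank $k$. The assumption $\rank A = n$ forces $A \in \mathbb{R}^{m \times n}$ to have full column rank, hence to be injective as a linear map from $\mathbb{R}^n$ into $\mathbb{R}^m$. Combined with $\rank Z_k = k$, so that the $n \times k$ matrix $Z_k$ has trivial null space, this yields that $AZ_k$ also has trivial null space, so $\rank(AZ_k) = k$.

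Next I would control the right-hand side $V_k H_k$. By the Arnoldi construction in lines 6--9 of Algorithm~\ref{AGPK}, the columns $v_1,\dots,v_k$ of $V_k$ are orthonormal; they are well defined because the previous subdiagonal entries $h_{i+1,i}$ for $i<k$ were nonzero, as is implicit in the algorithm having reached step $k$. Therefore $V_k$ is isometric, and multiplication by $V_k$ preserves ranks, giving $\rank(V_k H_k) = \rank(H_k)$. Equating the two ranks then forces $\rank(H_k) = k$, which is exactly the nonsingularity of the $k \times k$ matrix $H_k$.

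I do not anticipate a serious obstacle here: once~\eqref{z} has been specialized using $h_{k+1,k}=0$, the proof reduces to a standard rank-tracing along $AZ_k = V_k H_k$. The only point worth flagging is that the orthonormality of the columns of $V_k$ is not listed among the three stated hypotheses; it is inherited from the Arnoldi setting that produced $H_k$, $Z_k$, and $V_k$ in the first place, and a rigorous write-up should note this explicitly before invoking $\rank(V_k H_k) = \rank(H_k)$.
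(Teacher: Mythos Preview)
Your proposal is correct and follows essentially the same route as the paper: reduce~\eqref{z} to $AZ_k = V_k H_k$ via $h_{k+1,k}=0$, show $\rank(AZ_k)=k$ from the two rank hypotheses, and deduce $\rank H_k = k$. The only cosmetic difference is in the last step: you invoke orthonormality of the columns of $V_k$ to get $\rank(V_kH_k)=\rank H_k$, whereas the paper uses the generic bound $\rank(V_kH_k)\le\rank H_k$ (which needs nothing about $V_k$) together with $\rank H_k\le k$, sidestepping the point you flagged about orthonormality not being among the stated hypotheses.
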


\begin{proof}
	Let $c_1 A \boldsymbol{z}_1 + \cdots + c_k A \boldsymbol{z}_k = A(c_1 \boldsymbol{z}_1 + \cdots + c_k \boldsymbol{z}_k ) = 0$. Then, $ \rank A = n$ implies $c_1 \boldsymbol{z}_1 + \cdots + c_k \boldsymbol{z}_k = 0$, and $ \rank Z_k = k $ implies $c_1 = \cdots =c_k = 0$. Hence, $ \rank(AZ_k)=k$. Since, $h_{k+1,k} = 0$, $AZ_k = V_k H_k$. Thus, $k=\rank(AZ_k)=\rank(V_k H_k) \le \min(\rank V_k, \rank H_k ) = \min (k, \rank H_k)$. Hence, $\rank H_k = k$, and $H_k$ is nonsingular. 
\end{proof}

See also~\cite[Theorem~3]{M17} for convergence conditions of FGMRES preconditioned by multistep matrix splitting iterations, which ensure the nonsingularity of $H_k$.

The additional cost of the flexible variant over AB-GMRES is only the extra memory required to save the set of vectors $\{\boldsymbol{z}_j\}_{j=1,2,\ldots,m}$. 
However, the added advantage of flexibility may be worth this extra cost.

The computational work of one iteration for NE-SOR, K, RK, GRK and GK are summarized in Table~\ref{tab:0}, respectively. Note that we assume that ${{\left\| {{\boldsymbol{\alpha} _i}} \right\|_2^2}}$ is precomputed and stored. Here, $q=\text{nz}/m$ where $\text{nz}$ is the number of nonzero elements of $A$ (i.e. $q$ is the average number of nonzero elements per row of $A$) and $p$ is the density of nonzero elements of $C=AA^\mathsf{T}$ (See Appendix A for the estimation of $p$). Also, note that the Kaczmarz-type methods require computing $C=AA^\mathsf{T}$ once beforehand, which amounts to $qm^2$ floating point operations. Note also that we regard a pair of multiplication and addition as one floating-point operation.

\begin{table}[!t]
	\caption{Number of floating point operations required for one inner iteration.}\label{tab:0}
	\begin{center}
		\begin{tabular}{rr}
			\hline
			\multicolumn{1}{c}{Method} & \multicolumn{1}{c}{No. of floating point operations} \\
			\hline
			NE-SOR & $2q$  \\
			K &  $m(1+p)+q$\\
			GK & $m(1+p)+q$ \\
			RK & $m(1+p)+q$ \\
			GRK & $m(3+p)+q$ \\
			\hline
		\end{tabular}
		\begin{tablenotes}
			$q=\text{nz}/m$, where $\text{nz}$ is the number of nonzero elements of the matrix $A$ and $m$ is the number of rows of the matrix $A$, $p$ is the density of nonzero elements of the matrix $C=AA^\mathsf{T}$.
		\end{tablenotes}
	\end{center}
\end{table}

The number of floating point operations for the $k$th outer iteration (other than the inner iterations) of AB-GMRES and F-AB-GMRES is approximately $m(q+2k+2)$.

%{\color{black}{Table~\ref{tab:0} shows that an extra price has to be paid for the GRK inner iteration when $m > \frac{1}{7}(2n - 1)$ and for the GK inner iteration when $m > \frac{1}{3}(2n + 1)$, as only in these cases is the total computational complexity of the GRK inner iteration and GK inner iteration larger than that of the NE-SOR inner iteration.}}

\section{Numerical experiments}
We compare the proposed F-AB-GMRES preconditioned by Kaczmarz-type methods as inner iterations with AB-GMRES preconditioned by NE-SOR inner iterations~\cite{MH15} in terms of the central processing unit (CPU) time by numerical experiments on underdetermined and overdetermined problems.

Table~\ref{tab:1} gives the number of rows $m$, the number of columns $n$, the density of the nonzero elements, the rank and the condition number $\kappa(A)$ on the overdetermined test matrices.
The matrices RANDL$i$, $i=1,2,\ldots,6$ were randomly generated using the MATLAB function \texttt{sprandn}, as in~\cite{HYI10,MH13,MH15}. 
The illc1850, gen and photogrammetry2 are full-rank matrices from~\cite{DH11}.
The Maragal$\_j$, $j= 3,4,5$ are rank-deficient matrices from~\cite{DH11}.
These matrices were transposed to form underdetermined problems. Table~\ref{tab:1} shows the effective size of the matrices after removing all zero rows. (If the matrix $A$ has a zero row, then the Kaczmarz-type methods can not work.) 
The condition number was computed using the MATLAB function \texttt{svd}.
%The rank and condition number were computed using the MATLAB functions \texttt{sprank}~\cite{FL} and \texttt{svd}.

In our implementations, a solution vector $\boldsymbol{x}_{\star} \in {\mathbb{R}^{n}} $ is randomly generated by using the MATLAB function \texttt{randn}, and the right-hand side $\boldsymbol{b} \in {\mathbb{R}^{m}}$ is taken to be $A\boldsymbol{x}_{\star}$.
All computations are started from the initial vector $\boldsymbol{x}_0=0$, and the iterations are stopped when either the relative residual
\begin{equation*}
\frac{{{{\left\| {{{\boldsymbol{r}}_k}} \right\|}_2}}}{{{{\left\| \boldsymbol{b} \right\|}_2}}} \le {10^{ - 6}},
\end{equation*}
where ${\left\| {{\boldsymbol{r}_k}} \right\|_2} = {\left\| {\boldsymbol{b} - A{\boldsymbol{x}_k}} \right\|_2} = {\left\| {\beta {\boldsymbol{e}_1} - {{\bar H}_k}{\boldsymbol{y}_k}} \right\|_2}$ in Algorithm \ref{AGPK}, or the number of outer iterations reaches 2000. 
The latter is given a label `$--$' in the tables showing the numerical experiment results.
No restarts were used for GMRES.
For the Kaczmarz-type inner iterations, {\color{black}the initial vector} $\boldsymbol{z}^{(0)}=0$ was used. 

All experiments were carried out using MATLAB (version R2020b) on a personal computer with 1.80 GHz CPU (Intel(R) Core(TM) i5-8265U), 32 GB memory, and Microsoft Windows 10 Pro 64 bit Version 1909 with DirectX 12. 

% MATLAB (version R2017b) on a personal computer with 2.80 GHz CPU (Intel(R) Core(TM) i7-7700HQ Q9550), 16 GB memory, and Windows 10 Home version 1909. 

\subsection{Automatic parameter tuning for Kaczmarz-type methods}
The proposed method requires two preconditioning parameters: the maximum number of inner iterations $\ell_\text{max}$ in line 4 of Algorithm~\ref{AGPK} and the relaxation parameter $\omega$ used for the Kaczmarz-type inner iterations. Since the CPU time for the proposed method varies with the values of these parameters, it is desirable to determine the values automatically for any problem. Inspired by the idea in~\cite{MH15}, we perform the following procedure given as Algorithm~\ref{PTPK} using Kaczmarz-type methods alone for $A\boldsymbol{z}=\boldsymbol{b}$ before starting the outer iterations to determine the values of these parameters $\ell_\text{max}$ and $\omega_\text{opt}$.
Note that $\ell$ NE-SOR inner iterations of Algorithm~\ref{NE-SOR} is equivalent to $\ell m$ Kaczmarz inner iterations.

%{\color{black}{Note that for the NE-SOR (Kaczmarz) method of Algorithm~\ref{NE-SOR}, ``$\ell$ iterations'' in Algorithm~\ref{PTPK} is equivalent to $\ell m$ iterations.}}

\begin{algorithm}[H]
	\caption{Parameter tuning procedure of Kaczmarz-type methods}
	\label{PTPK}
	\begin{algorithmic}[1]
		\STATE Set $\omega=1$ and $\boldsymbol{z}^{(0)}=0$.
		\STATE Apply a Kaczmarz-type method to $A\boldsymbol{z}=\boldsymbol{b}$
		{\color{black}until the $\ell$th iteration $\boldsymbol{z}^{(\ell)}$ satisfies}
		$${\left\| {\boldsymbol{b} - A{\boldsymbol{z}^{(\ell)}}} \right\|_2} \leq \eta {\left\| \boldsymbol{b} \right\|_2}.$$
		\STATE $\ell_\text{max} = \ell$
		\FOR{$\omega = 0.1, 0.2, \ldots, 1.9$} 
		\STATE Apply $\ell_\text{max}$ iterations of a Kaczmarz-type method to $A\boldsymbol{z}=\boldsymbol{b}$.
		\ENDFOR
		\STATE ${\omega _\text{opt}} = \arg \mathop {\min }_{\omega=0.1,0.2,\ldots,1.9} {\left\|
			{\boldsymbol{b} - A{\boldsymbol{z}^{(\ell_\text{max})}}} \right\|_2}$
	\end{algorithmic}
\end{algorithm}
%to automatically tune the value of the parameters for the proposed method.
Since the number of inner iterations in F-AB-GMRES does not have to be fixed for each outer iteration, we set $\ell_\text{max}$ determined by Algorithm~\ref{PTPK} to be the maximum number of inner iterations for each outer iteration.
Since the parameter $\ell_\text{max}$ of RK and GRK changed from one time to another, we repeated steps 2--7 of Algorithm~\ref{PTPK} ten times and took the median. For AB-GMRES, we set $\ell_\text{max}$ determined by Algorithm~\ref{PTPK} to be the fixed number of inner iterations for each outer iteration.
%of F-AB-GMRES, instead of setting it to be a fixed number of inner iterations, as in~\cite{MH13} for AB-GMRES.

We tested the above procedure for the martrix Maragal$\_$3T as shown in Table~\ref{tab:eta}. Different values for $\eta$ in the procedure were used: $\eta=10^{-\mu}$, $\mu=2, 1, 0.5$.
%to automatically tune the values of the parameters for AB-GMRES preconditioned by NE-SOR inner iterations. 
%Here, we also choose $\eta=10^{-1}$.
Table~\ref{tab:eta} gives the numerical experiment results with different $\eta$ for matrix Maragal$\_$3T.
The first row in each cell in Table~\ref{tab:eta} gives the number of outer iterations outside parentheses and gives the total number of inner iterations (the sum of the number{\color{black}s} of inner iterations in each outer iteration)
%the automatically tuned maximum number of inner iterations $\ell_\text{max}$ per outer iteration 
and relaxation parameter in parentheses.
Here, the number of inner iterations is $\ell m$ for Algorithm~\ref{NE-SOR}, and $\ell_{k}$ for line 4 of Algorithm~\ref{AGPK}.
The second row in each cell gives the total CPU time in seconds including the parameter tuning time and the formation of $C$ outside parentheses, and the parameter tuning time in seconds in parentheses. Here, {\color{black}the number of iterations and} the CPU time mean the median of {\color{black}{the required iterations and}} the elapsed CPU time for ten times of repeated runs for the same $\boldsymbol{b}$ for the corresponding method, respectively. The $\star$ indicates the fastest method regarding the CPU time.
The third row in each cell gives the relative error norm ${\left\| {{\boldsymbol{x}_k} - {\boldsymbol{x}_ *  }} \right\|_2} / {\left\| {{\boldsymbol{x}_ * }} \right\|_2}$, where $\boldsymbol{x}_ *=A^\dag \boldsymbol{b}$  is the minimum Euclidean-norm solution. In our implementations, $A^\dag$ is obtained by the MATLAB function \texttt{pinv}. 
%{\color{black}{We used the Matlab function \texttt{rng} to generate random seeds \texttt{rng(1)}, \texttt{rng(2)}, $\ldots$, \texttt{rng(10)} for fixing the order of working rows selected by the RK and GRK methods. We also used these random seeds in the parameter tuning procedure to obtain the values of the parameters $\ell_\text{max}$ and $\omega_\text{opt}$.}}

We remark that Morikuni and Hayami~\cite{MH15} evaluated the performance of AB-GMRES preconditioned by NE-SOR inner iterations for 
different values of $\eta$ and finally chose $\eta=10^{-1}$. 
%In view of the results in Table~\ref{tab:eta}, 
In the following, {\color{black}also} $\eta=10^{-1}$ was used in Algorithm~\ref{AGPK} {\color{black}and~\ref{PTPK}} with the mentioned procedure to automatically tune the values of the parameters $\ell_\text{max}$ and $\omega_\text{opt}$.

\begin{table}[!t] %table 1
	\caption{Information of the matrices.}\label{tab:1}
	\begin{center}
		\begin{tabular}{rrrrrr}
			\hline
			\multicolumn{1}{c}{Name} & \multicolumn{1}{c}{$m$} & \multicolumn{1}{c}{$n$} &\multicolumn{1}{c}{ Density[\%]} & \multicolumn{1}{c}{Rank} & \multicolumn{1}{c}{$\kappa(A)$} \\
			\hline
			RANDL1 & 5000 & 500 & 20 & 500 & 1.00$\times 10^{1}$ \\
			RANDL2 & 5000 & 500 & 20 & 500 & 1.00$\times 10^{2}$ \\
			RANDL3 & 5000 & 500 & 20 & 500 & 1.00$\times 10^{3}$ \\
			RANDL4 & 5000 & 500 & 20 & 500 & 1.00$\times 10^{4}$ \\
			RANDL5 & 5000 & 500 & 20 & 500 & 1.00$\times 10^{5}$ \\
			RANDL6 & 5000 & 500 & 20 & 500 & 1.00$\times 10^{6}$  \\
			illc1850 & 1850 & 712 & 0.66 & 712 & 1.40$\times 10^{3}$ \\
			gen & 2561 & 769 & 3.20 & 769 & 27.72 \\
			photogrammetry2 & 4472 & 936 & 0.89 & 936 & 1.34$\times 10^{8}$ \\
			Maragal$\_$3 & 1682 & 858 & 1.27 & 613 & 1.10$\times 10^{3}$ \\
			Maragal$\_$4 & 1964 & 1027 & 1.32 & 801 & 9.33$\times 10^{6}$ \\
			Maragal$\_$5 & 4654& 3296 & 0.61 & 2147 & 1.19$\times 10^{5}$ \\
			\hline
		\end{tabular}
		\begin{tablenotes}
			Name: name of the matrix, $m$: number of rows of the matrix, $n$: number of columns of the matrix, Density: density of the nonzero components of the matrix, Rank: maximum number of linearly independent columns of the matrix, obtained by the MATLAB command \texttt{rank(full($A$))}, which is based on the singular value decomposition of $A$, $\kappa(A)$: condition number of the matrix ${\sigma _{\max }}/{\sigma _{\min }}$, where ${\sigma _{\max }}$ and ${\sigma _{\min }}$ are the largest and smallest nonzero singular values of the matrix, respectively.
		\end{tablenotes}
	\end{center}
\end{table}

\begin{table}[!t] %table 2
	{\footnotesize
		\setlength{\tabcolsep}{3.8pt}
		\caption{Results with different values for $\eta$ for {\color{black}{the}} matrix Maragal$\_$3T.}\label{tab:eta}
		\begin{center}
			\begin{tabular}{ll|rrrrrr}
				\hline
				Outer & Inner &  &  & & & & \\
				iteration & iteration & \multicolumn{2}{c}{$\eta=10^{-2}$} & 
				\multicolumn{2}{c}{$\eta=10^{-1}$} & \multicolumn{2}{c}{$\eta=10^{-0.5}$} \\
				\hline
				AB-GMRES & NE-SOR &  18 & (401544, 1.3) & 57 & (195624, 
				1.2) & 86 & (147576, 0.9) \\
				& & 8.55 & (4.52) & 2.62 & (0.66) & 1.86 & (0.35) \\
				& & &1.96$\times 10^{-5}$  & &3.02$\times 10^{-5}$  & & 4.65$\times 10^{-5}$  \\
				\hline
				F-AB-GMRES & K & 19 & (420100, 1.2) & 68 & (176392, 1.0) & 127 & (113279, 
				0.8) \\
				& & 9.19 & (4.72) & 2.39 & (0.53) & 1.46 & (0.19) \\
				& & &2.46$\times 10^{-5}$  & &4.88$\times 10^{-5}$  & & 3.52$\times 10^{-5}$  \\
				\cline{2-8}
				& RK & 79.5 & (5388296, 1.3) & 284.5 & (1402900, 1.1) & 558.5 & (711934, 1.0) \\
				& & 355.36 & (95.24) & 73.81 & (6.93) & 36.93 & (1.79) \\
				& & &4.60$\times 10^{-5}$  & &7.38$\times 10^{-6}$  & & 8.07$\times 10^{-6}$  \\
				\cline{2-8}
				& GRK & 17.5& (123470, 1.3) & 67.5 & (44395, 1.2) & 166 & (34860, 1.1) \\
				& & 18.72 & (11.81) & 3.61 & 
				(1.10) & 2.48 & (0.37) \\
				& & &1.31$\times 10^{-5}$  & &1.61$\times 10^{-5}$  & & 1.98$\times 10^{-5}$  \\
				\cline{2-8}
				& GK & 27 & (176344, 1.3) & 129 & (83850, 1.0) & 345 & (71415, 1.0) \\
				& & \color{red}$\star$3.79 & (1.67) & \color{red}$\star$1.28 & 
				(0.18) & \color{red}$\star$1.20 & (0.06) \\
				& & &4.56$\times 10^{-5}$  & &3.53$\times 10^{-5}$  & & 6.10$\times 10^{-5}$  \\
				\hline
			\end{tabular}
	\end{center}}
	{\footnotesize
		\begin{center}
			\begin{tablenotes}
				\item[1] First row: Number of outer iterations (total number of inner iterations, $\omega$).
				\item[2] Second row: Total CPU time, which includes parameter tuning time in parentheses, in seconds.
				\item[3] Third row: Relative error norm.
			\end{tablenotes}
	\end{center}}
\end{table}

\subsection{Underdetermined problems}
We first present numerical experiment results on underdetermined problems ($m < n$).
Tables~\ref{tab:2},~\ref{tab:fullrank_under} and~\ref{tab:3} give the numerical experiment results on artificial random matrices, full-rank matrices and rank-deficient matrices, respectively.
The letter T at the end of the name of a matrix denotes the transposition of the matrix.
In order to improve computing efficiency for the Kaczmarz-type methods, we used the recursive formula~\eqref{r} to update the residual vector $\boldsymbol{s}^{(p)}$ for each inner iteration.
To do so, we need to compute $C=AA^\mathsf{T}$ in advance.
The CPU time in seconds for computing matrix $C$ is given below the name of the matrix.
The total CPU time for F-AB-GMRES preconditioned by the Kaczmarz-type inner iterations includes the time for computing $C$. We also remark that the column-oriented access to the matrix $A^\mathsf{T}$ instead of the row-oriented access to $A$ was used throughout the programs for efficient data access with MATLAB. (The CPU time required to transpose $A$ is negligible.)

\begin{table}[!t] %table 3
	{\footnotesize
		\setlength{\tabcolsep}{4.8pt}
		\caption{Results for full-rank artificial random matrices (underdetermined 
			problems).}\label{tab:2}
		\begin{center}
			\begin{tabular}{ll|rrrrrr}
				\hline
				Outer & Inner & \multicolumn{2}{c}{RANDL1T} & 
				\multicolumn{2}{c}{RANDL2T} & \multicolumn{2}{c}{RANDL3T} \\
				iteration & iteration & \multicolumn{2}{c}{0.18} & 
				\multicolumn{2}{c}{0.17} & \multicolumn{2}{c}{0.18} \\
				\hline
				AB-GMRES & NE-SOR &  11 & (16500, 1.1) & 38 & (114000, 1.0) & 107 & (267500, 1.0) \\
				& & 1.18 & (0.76) & 4.34 & (1.50) & 8.04 & (1.26) \\
				& & &2.30$\times 10^{-7}$  & &5.82$\times 10^{-6}$  & & 9.33$\times 10^{-6}$  \\
				\hline
				F-AB-GMRES & K & 13 & (15080, 1.0) & 39 & (106431, 1.0) & 120 & (248280, 1.0) \\
				& & 1.16 & (0.59) & 4.27 & (1.39) & 7.65 & (1.06) \\
				& & &3.61$\times 10^{-7}$  & &7.51$\times 10^{-6}$  & & 1.19$\times 10^{-5}$   \\
				\cline{2-8}
				& RK & 10 & (29210, 1.0) & 47 & (226680, 1.0) & 212.5 & (1079100, 1.0) \\
				& & 7.21 & (5.20) & 22.31 & (8.50) & 75.08 & (8.93) \\
				& & &9.68$\times 10^{-7}$  & &9.67$\times 10^{-6}$  & & 1.55$\times 10^{-5}$   \\
				\cline{2-8}
				& GRK & 9 & (4746, 1.1) & 36.5 & (31458, 1.1) & 102.5 & (80378, 1.1) \\
				& & 1.64 & (1.12) & 4.15 & (1.76) & 7.53 & (1.60) \\
				& & &5.95$\times 10^{-7}$ & &7.29$\times 10^{-6}$  & & 2.16$\times 10^{-5}$   \\
				\cline{2-8}
				& GK & 9 & (4733, 1.1) & 36 & (32677, 1.3) & 106 & (79341, 1.1) \\
				& & \color{red}$\star$0.61 & (0.29) & \color{red}$\star$1.56 & 
				(0.49) & \color{red}$\star$2.91 & (0.42) \\
				& & &9.11$\times 10^{-7}$  & &5.23$\times 10^{-6}$  & & 2.81$\times 10^{-5}$   \\
				\hline
			\end{tabular}
	\end{center}}
	\medskip
	{\footnotesize
		\setlength{\tabcolsep}{4.4pt}
		\begin{center}
			\begin{tabular}{ll|rrrrrr}
				\hline
				Outer & Inner & \multicolumn{2}{c}{RANDL4T} & 
				\multicolumn{2}{c}{RANDL5T} & \multicolumn{2}{c}{RANDL6T} \\
				iteration & iteration & \multicolumn{2}{c}{0.17} & 
				\multicolumn{2}{c}{0.17} & \multicolumn{2}{c}{0.17} \\
				\hline
				AB-GMRES & NE-SOR & 72 & (180000, 0.9)& 199 & (497500, 
				0.9) & 107 & (321000, 1.0) \\
				& & 5.80 & (1.24) & 13.82 & (1.26) & 9.63 & (1.52) \\
				& & &1.34$\times 10^{-4}$  & &1.10$\times 10^{-3}$  & & 5.39$\times 10^{-2}$ \\
				\hline
				F-AB-GMRES & K & 75 & (163725, 1.0) & 203 & (440713, 1.0) & 118 & 
				(304558, 1.0) \\
				& & 5.54 & (1.12) & 12.59 & (1.11) & 9.39 & (1.39) \\
				& & &1.61$\times 10^{-4}$  & &5.10$\times 10^{-3}$  & & 5.39$\times 10^{-2}$  \\
				\cline{2-8}
				& RK & 144.5 & (630640, 1.1) & 346.5 & (1527016, 1.0) & 166.5 & 
				(899443, 1.1) \\
				& & 46.56 & (7.83) & 101.93 & (7.91) & 66.04 & (10.15) \\
				& & &1.09$\times 10^{-4}$  & &6.90$\times 10^{-3}$  & & 5.39$\times 10^{-2}$  \\
				\cline{2-8}
				& GRK & 64.5 & (44853, 1.2) & 288.5 & (214321, 1.1) & 75.5 & (70544, 1.4) \\
				& & 4.77 & (1.42) & 16.37 & (1.53) & 7.17 & (1.92) \\
				& & &5.70$\times 10^{-5}$  & &6.20$\times 10^{-3}$  & & 5.39$\times 10^{-2}$  \\
				\cline{2-8}
				& GK & 69 & (46389, 1.2) & 264 & (193503, 1.2) & 80 & (67099, 1.1) \\
				& & \color{red}$\star$1.91 & (0.38) & \color{red}$\star$5.94 & 
				(0.41) & \color{red}$\star$2.58 & (0.46) \\
				& & &4.28$\times 10^{-5}$  & &4.30$\times 10^{-3}$  & & 5.39$\times 10^{-2}$  \\
				\hline
			\end{tabular}
	\end{center}}
	{\footnotesize
		\begin{center}
			\begin{tablenotes}
				\item[1] {\color{black}{The CPU time in seconds for computing matrix $C$ is given below each name of the matrix.}}
				\item[2] First row: Number of outer iterations (total number of inner iterations, $\omega$).
				\item[3] Second row: Total CPU time, which includes parameter tuning 
				time in parentheses, in seconds.
				\item[4] Third row: Relative error norm.
			\end{tablenotes}
	\end{center}}
\end{table}

\begin{figure}[!t] %figure 1
	\centering
	\begin{minipage}[t]{0.46\linewidth}
		\centering
		\includegraphics[width=2.3in]{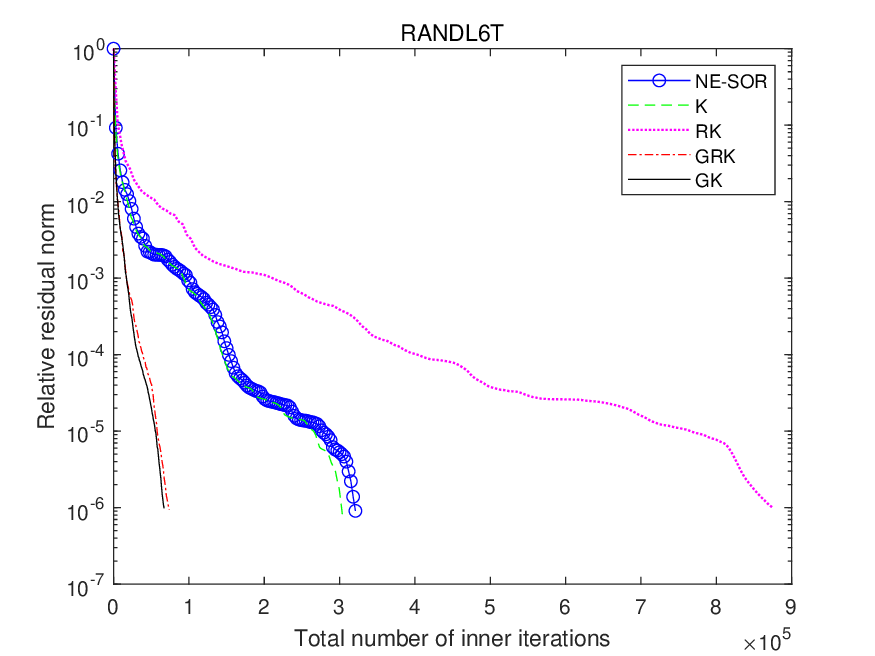}
	\end{minipage}
	\begin{minipage}[t]{0.46\linewidth}
		\centering
		\includegraphics[width=2.3in]{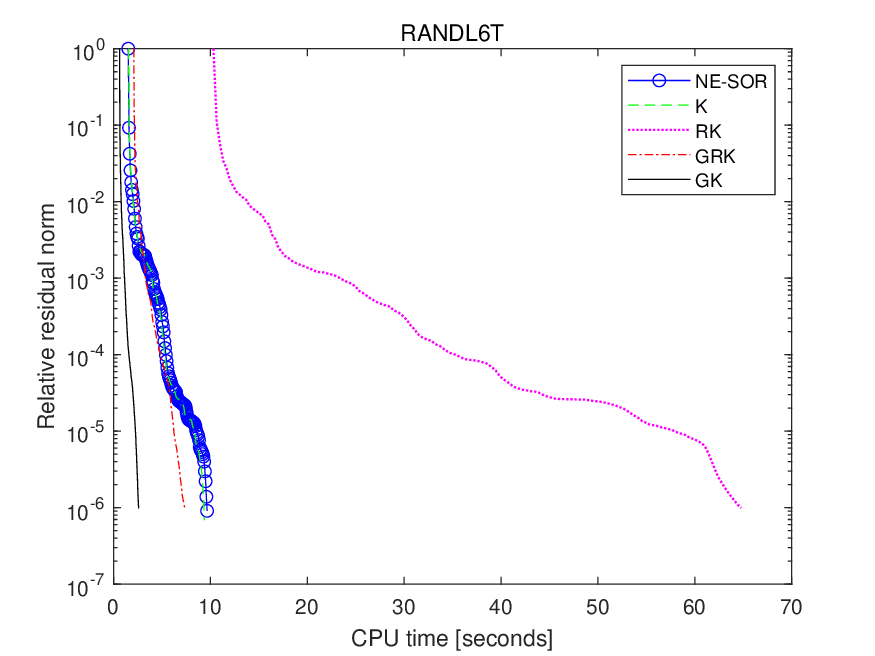}
	\end{minipage}
	\caption{Relative residual norm ${{{{\left\| {{{\boldsymbol{r}}_k}} \right\|}_2}}}/{{{{\left\| \boldsymbol{b} \right\|}_2}}}$ vs.\ total number of inner iterations (left) and relative residual norm ${{{{\left\| {{{\boldsymbol{r}}_k}} \right\|}_2}}}/{{{{\left\| \boldsymbol{b} \right\|}_2}}}$ vs.\ CPU time (right) for RANDL6T.}
	\label{fig:1}
\end{figure}

Table~\ref{tab:2} shows that F-AB-GMRES preconditioned by the GK inner iterations is the fastest among all the methods for the artificial random matrices RANDL$i$T, $i=1,2,\ldots,6$.
We remark that the total number of inner iterations of F-AB-GMRES with GRK and GK is also smaller than that of AB-GMRES with NE-SOR. This may imply that F-AB-GMRES with GRK and GK has a smaller workload than AB-GMRES with NE-SOR.

In Figure~\ref{fig:1}, we plot the relative residual norm ${{{{\left\| {{{\boldsymbol{r}}_k}} \right\|}_2}}}/{{{{\left\| \boldsymbol{b} \right\|}_2}}}$ versus {\color{black}the} total number of inner iterations and CPU time in seconds for the matrix RANDL6T.
Here, the CPU time includes the time for the parameter tuning and the computation of $C$. The figure shows that F-AB-GMRES preconditioned by the GK inner iterations is best among all the methods regarding the total number of inner iterations and CPU time for the matrix RANDL6T. These results are in accordance with Table~\ref{tab:2}.

%The figure shows that F-AB-GMRES preconditioned by the GRK inner iterations is best among all the methods regarding the total number of inner iterations, and the GK inner iterations is the fastest among all the methods regarding CPU time. These results are in accordance with Table~\ref{tab:2}.

\begin{table}[!t] %table 5
	{\footnotesize
		\setlength{\tabcolsep}{3.8pt}
		\caption{Results for full-rank matrices (underdetermined problems).}\label{tab:fullrank_under}
		\begin{center}
			\begin{tabular}{ll|rrrrrr}
				\hline
				Outer & Inner & \multicolumn{2}{c}{illc1850T } & 
				\multicolumn{2}{c}{genT } & \multicolumn{2}{c}{photogrammetry2T } \\
				iteration & iteration & \multicolumn{2}{c}{0.01} & 
				\multicolumn{2}{c}{0.02} & \multicolumn{2}{c}{0.01} \\
				\hline
				AB-GMRES & NE-SOR & 262 & (746176, 1.1) & 35 & (430640, 
				0.8) & 27 & (151632, 0.9) \\
				& & \color{red}$\star$5.87 & (0.43) & 8.28 & (3.04) & 3.34 & (1.47) \\
				& & &2.72$\times 10^{-4}$  & &2.98$\times 10^{-6}$  & & 7.22$\times 10^{-2}$  \\
				\hline
				F-AB-GMRES & K & 281 & (731709, 1.1) & 48 & (408226, 0.9) & 26& (145494, 
				0.9) \\
				& & 6.17 & (0.41) & 7.39 & (2.18) & 3.28 & (1.47) \\
				& & &3.45$\times 10^{-4}$  & &2.87$\times 10^{-6}$  & & 7.22$\times 10^{-2}$  \\
				\cline{2-8}
				& RK & 396 & (2122830, 1.1) & 11 & (113731, 1.2) & 32 & (322140, 
				1.0) \\
				& & 95.29 & (6.60) & 20.84 & (15.13) & 31.62 & (15.09) \\
				& & &3.09$\times 10^{-4}$  & &8.94$\times 10^{-7}$  & & 7.22$\times 10^{-2}$  \\
				\cline{2-8}
				& GRK & 372 & (426620, 1.1) & 9 & (17815, 1.5) & 31 & (40500, 1.1) \\
				& & 23.55 & (1.74) & 4.61 & (3.53) & 4.84 & (2.31) \\
				& & &2.61$\times 10^{-4}$  & &5.45$\times 10^{-7}$  & & 7.22$\times 10^{-2}$  \\
				\cline{2-8}
				& GK & 593 & (666988, 1.2) & 8 & (15119, 1.6) & 32 & (49526, 1.1) \\
				& & 6.99 & (0.22) & \color{red}$\star$0.85 & 
				(0.60) & \color{red}$\star$1.22 & (0.45) \\
				& & &2.70$\times 10^{-4}$  & &6.97$\times 10^{-7}$  & & 7.22$\times 10^{-2}$ \\
				\hline
			\end{tabular}
	\end{center}}
	{\footnotesize
		\begin{center}
			\begin{tablenotes}
				\item[1] {\color{black}{The CPU time in seconds for computing matrix $C$ is given below each name of the matrix.}}
				\item[2] First row: Number of outer iterations (total number of inner iterations, $\omega$).
				\item[3] Second row: Total CPU time, which includes parameter tuning time in parentheses, in seconds.
				\item[4] Third row: Relative error norm.
			\end{tablenotes}
	\end{center}}
\end{table}

\begin{figure}[!t] %figure 2
	\centering
	\begin{minipage}[t]{0.46\linewidth}
		\centering
		\includegraphics[width=2.3in]{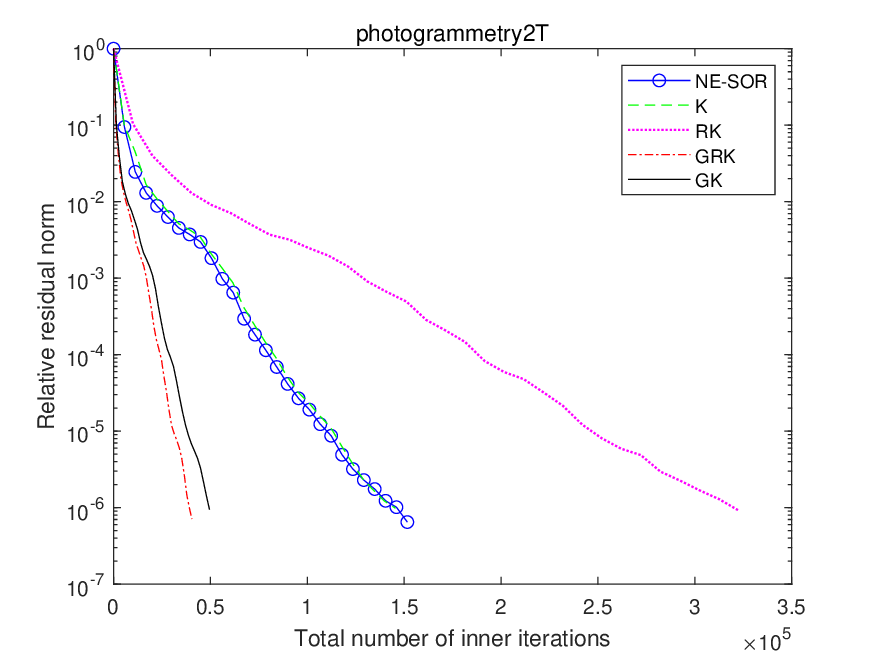}
	\end{minipage}
	\begin{minipage}[t]{0.46\linewidth}
		\centering
		\includegraphics[width=2.3in]{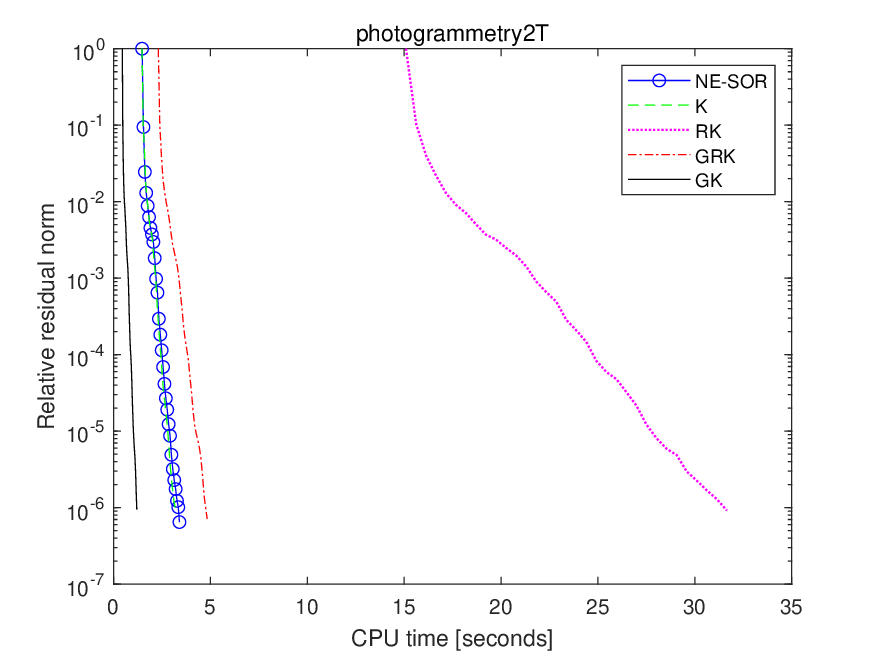}
	\end{minipage}
	\caption{Relative residual norm ${{{{\left\| {{{\boldsymbol{r}}_k}} \right\|}_2}}}/{{{{\left\| \boldsymbol{b} \right\|}_2}}}$ vs.\ total number of inner iterations (left) and relative residual norm ${{{{\left\| {{{\boldsymbol{r}}_k}} \right\|}_2}}}/{{{{\left\| \boldsymbol{b} \right\|}_2}}}$ vs.\ CPU time (right) for photogrammetry2T.}
	\label{fig:fullrank_under}
\end{figure}

%Table~\ref{tab:fullrank_under} shows that F-AB-GMRES preconditioned by the GK method is the fastest regarding the CPU time among all the methods for matrices illc1850T, genT and photogrammetry2T.

{\color{black}{Table~\ref{tab:fullrank_under} shows that AB-GMRES preconditioned by the NE-SOR inner iterations is the fastest among all the methods for the matrix illc1850T. F-AB-GMRES preconditioned by the GK inner iterations is the fastest among all the methods for the matrices genT and photogrammetry2T.}}

In Figure~\ref{fig:fullrank_under}, we plot the relative residual norm ${{{{\left\| {{{\boldsymbol{r}}_k}} \right\|}_2}}}/{{{{\left\| \boldsymbol{b} \right\|}_2}}}$ versus the total number of inner iterations and CPU time in seconds for the matrix photogrammetry2T. Figure~\ref{fig:fullrank_under} shows that F-AB-GMRES preconditioned by the GRK inner iterations is best among all the methods when comparing the total number of inner iterations, and the GK inner iterations is best among all the methods regarding the CPU time for the matrix photogrammetry2T. These results are in accordance with Table~\ref{tab:fullrank_under}.

\begin{table}[!t] %table 6
	{\footnotesize
		\setlength{\tabcolsep}{3.7pt}
		\caption{Results for rank-deficient matrices (underdetermined problems).}\label{tab:3}
		\begin{center}
			\begin{tabular}{ll|rrrrrr}
				\hline
				Outer & Inner & \multicolumn{2}{c}{Maragal$\_$3T } & 
				\multicolumn{2}{c}{Maragal$\_$4T } & \multicolumn{2}{c}{Maragal$\_$5T } \\
				iteration & iteration & \multicolumn{2}{c}{0.01} & 
				\multicolumn{2}{c}{0.02} & \multicolumn{2}{c}{0.19} \\
				\hline
				AB-GMRES & NE-SOR &  57 & (195624, 1.2) & 51 & (209508, 
				1.1) & 144 & (1898496, 1.2) \\
				& & 2.62 & (0.66) & 3.39 & (0.97) & 43.12 & (5.29) \\
				& & &3.02$\times 10^{-5}$  & &2.97$\times 10^{-2}$  & & 1.71$\times 10^{-4}$  \\
				\hline
				F-AB-GMRES & K & 68 & (176392, 1.0) & 60 & (186660, 1.1) & 176 & (1747326, 
				1.1) \\
				& & 2.39 & (0.53) & 3.06 & (0.76) & 41.66 & (4.29) \\
				& & &4.88$\times 10^{-5}$  & &2.97$\times 10^{-2}$  & & 1.71$\times 10^{-4}$  \\
				\cline{2-8}
				& RK & 284.5 & (1402900, 1.1) &168 & (1124400, 1.1) & 498.5 & (8551269, 
				1.1) \\
				& & 73.81 & (6.93) & 66.73 & (9.93) & 1057.50 & (54.89) \\
				& & &7.38$\times 10^{-6}$  & &2.97$\times 10^{-2}$  & & 2.60$\times 10^{-3}$  \\
				\cline{2-8}
				& GRK & 67.5 & (44395, 1.2) & 54 & (56203, 1.2) & 139 & (340070, 1.1) \\
				& & 3.61 & (1.10) & 5.33 & (1.90) & 62.12 & (10.74) \\
				& & &1.61$\times 10^{-5}$  & &2.97$\times 10^{-2}$  & & 2.96$\times 10^{-4}$  \\
				\cline{2-8}
				& GK & 129 & (83850, 1.0) & 86 & (81420, 1.1) & 219 & (550274, 1.1) \\
				& & \color{red}$\star$1.28 & (0.18) & \color{red}$\star$1.53 & 
				(0.30) & \color{red}$\star$16.98 & (1.49) \\
				& & &3.53$\times 10^{-5}$  & &2.97$\times 10^{-2}$  & & 7.55$\times 10^{-4}$  \\
				\hline
			\end{tabular}
	\end{center}}
	{\footnotesize
		\begin{center}
			\begin{tablenotes}
				\item[1] {\color{black}{The CPU time in seconds for computing matrix $C$ is given below each name of the matrix.}}
				\item[2] First row: Number of outer iterations (total number of inner iterations, $\omega$).
				\item[3] Second row: Total CPU time, which includes parameter tuning time in parentheses, in seconds.
				\item[4] Third row: Relative error norm.
			\end{tablenotes}
	\end{center}}
\end{table}

Table~\ref{tab:3} shows that F-AB-GMRES preconditioned by the GK inner iterations is also the fastest among all the methods for the rank-deficient matrices Maragal$\_$3T, Maragal$\_$4T and Maragal$\_$5T.
% {\color{black}{We remark that F-AB-GMRES preconditioned by the GRK inner iterations sometimes does not give relative residual ${{{{\left\| {{{\boldsymbol{r}}_k}} \right\|}_2}}}/{{{{\left\| \boldsymbol{b} \right\|}_2}}} \le {10^{ - 6}}$ within the maximum number of outer iterations. In the above experiment, this happened four times for Maragal$\_$5T. (See Appendix C for the results with F-ABGMRES preconditioned by the GRK inner iterations using random seeds).}}

In Figure~\ref{fig:2}, we plot the relative residual norm ${{{{\left\| {{{\boldsymbol{r}}_k}} \right\|}_2}}}/{{{{\left\| \boldsymbol{b} \right\|}_2}}}$ versus {\color{black}the} total number of inner iterations and CPU time in seconds for the matrix Maragal$\_$5T.
Figure~\ref{fig:2} shows that F-AB-GMRES preconditioned by the GRK inner iterations is best among all the methods regarding the total number of inner iterations, and the GK inner iterations is the fastest among all the methods regarding CPU time for the matrix Maragal$\_$5T. These results are in accordance with Table~\ref{tab:3}.

\begin{figure}[!t] %figure 3
	\centering
	\begin{minipage}[t]{0.46\linewidth}
		\centering
		\includegraphics[width=2.3in]{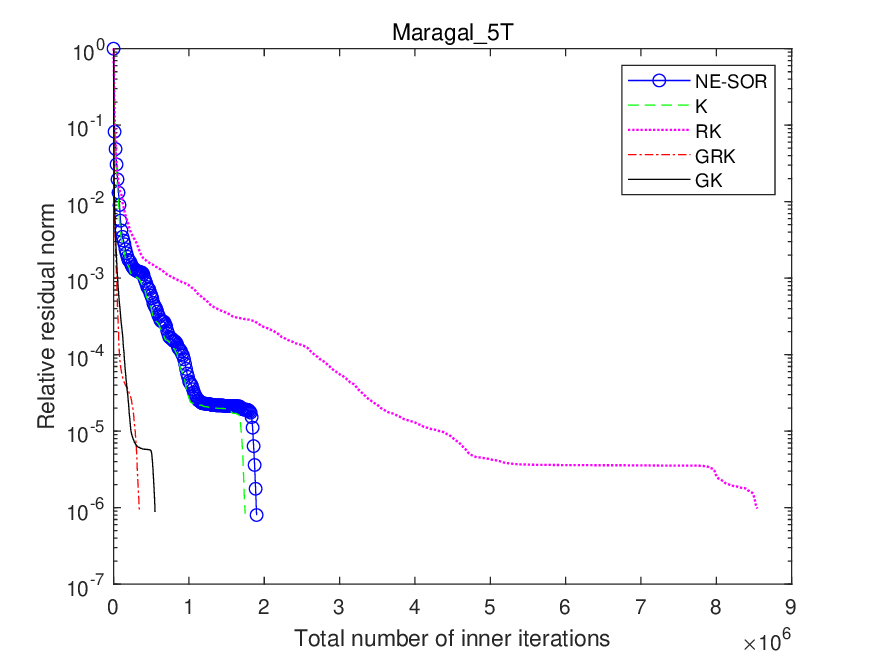}
	\end{minipage}
	\begin{minipage}[t]{0.46\linewidth}
		\centering
		\includegraphics[width=2.3in]{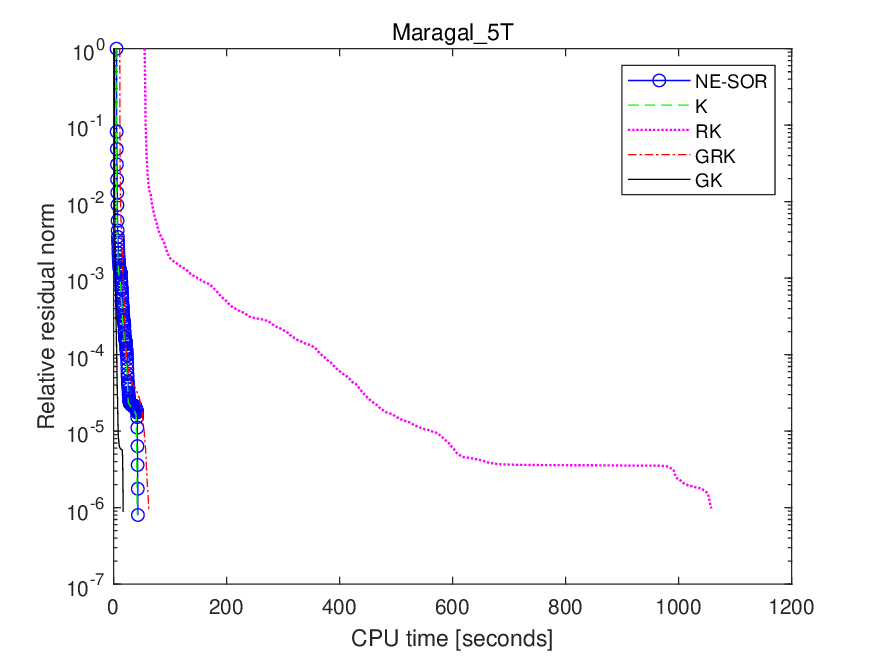}
	\end{minipage}
	\caption{Relative residual norm ${{{{\left\| {{{\boldsymbol{r}}_k}} \right\|}_2}}}/{{{{\left\| \boldsymbol{b} \right\|}_2}}}$ vs.\ total number of inner iterations (left) and relative residual norm ${{{{\left\| {{{\boldsymbol{r}}_k}} \right\|}_2}}}/{{{{\left\| \boldsymbol{b} \right\|}_2}}}$ vs.\ CPU time (right) for Maragal$\_$5T.}
	\label{fig:2}
\end{figure}

\subsection{Overdetermined problems}
Next, we present numerical experiment results on overdetermined problems ($m>n$). 
Tables~\ref{tab:4},~\ref{tab:fullrank_over} and~\ref{tab:5} give the numerical experiment results for artificial random matrices, full-rank matrices and rank-deficient matrices, respectively similarly to Tables~\ref{tab:2},~\ref{tab:fullrank_under} and~\ref{tab:3}. 

%The symbol ``$--$'' is given when the method exceeds 2000 outer iterations.

\begin{table}[!t] %table 7
	{\footnotesize
		\setlength{\tabcolsep}{4.0pt}
		\caption{Results for full-rank artificial random matrices (overdetermined problems).}\label{tab:4}
		\begin{center}
			\begin{tabular}{ll|rrrrrr}
				\hline
				Outer & Inner & \multicolumn{2}{c}{RANDL1} & 
				\multicolumn{2}{c}{RANDL2} & \multicolumn{2}{c}{RANDL3} \\
				iteration & iteration & \multicolumn{2}{c}{2.22} & 
				\multicolumn{2}{c}{2.27} & \multicolumn{2}{c}{2.17} \\
				\hline
				AB-GMRES & NE-SOR &  2 & (10000, 1.0) & 2 & (10000, 1.1) & 2 & (10000, 0.7) \\
				& & 3.87 & (3.52) & 3.87 & (3.51) &3.89 & (3.53) \\
				& & &1.94$\times 10^{-8}$  & &9.54$\times 10^{-7}$  & & 1.68$\times 10^{-5}$  \\
				\hline
				F-AB-GMRES & K & $--$ & ($--$, 1.0) & $--$ & ($--$, 1.0) & $--$ & ($--$, 
				0.9) \\
				& & $--$ & (1.25) & $--$ & (1.17) & $--$ & (1.25) \\
				& & &$--$  & &$--$  & & $--$  \\
				\cline{2-8}
				& RK & 9 & (34049, 1.0) & 165 & (935140, 1.0) & 256 & (1362812, 
				1.0) \\
				& & 25.57 & (17.88) & 165.87 & (21.55) & 231.57 & (20.70) \\
				& & &8.49$\times 10^{-7}$  & &6.53$\times 10^{-6}$  & & 3.61$\times 10^{-6}$  \\
				\cline{2-8}
				& GRK & 6 & (1696.5, 1.0) & 6 & (1768, 1.0) & 6 & (1763, 1.0) \\
				& & 4.61 & (2.06) & 4.73 & (2.11) & 4.57 & (2.04) \\
				& & &8.85$\times 10^{-7}$  & &8.15$\times 10^{-7}$  & & 8.88$\times 10^{-7}$  \\
				\cline{2-8}
				& GK & 7 & (2472, 1.0) & 16 & (5904, 1.0) & 28 & (8568, 1.0) \\
				& & \color{red}$\star$2.66 & (0.33) & \color{red}$\star$2.85 & 
				(0.32) &  \color{red}$\star$2.83 & (0.29) \\
				& & &2.99$\times 10^{-7}$  & &2.75$\times 10^{-6}$  & & 9.91$\times 10^{-6}$ \\
				\hline
			\end{tabular}
	\end{center}}
	\medskip
	{\footnotesize
		\setlength{\tabcolsep}{3.2pt}
		\begin{center}
			\begin{tabular}{ll|rrrrrr}
				\hline
				Outer & Inner & \multicolumn{2}{c}{RANDL4} & 
				\multicolumn{2}{c}{RANDL5} & \multicolumn{2}{c}{RANDL6} \\
				iteration & iteration & \multicolumn{2}{c}{2.26} & 
				\multicolumn{2}{c}{2.32} & \multicolumn{2}{c}{2.24} \\
				\hline
				AB-GMRES & NE-SOR & 2 & (10000, 1.0)& 2 & (10000, 
				1.0) & 3 & (15000, 0.7) \\
				& & 3.94 & (3.58) &3.97 & (3.61) & 4.07 & (3.53) \\
				& & &3.47$\times 10^{-8}$ & &2.54$\times 10^{-9}$  & & 4.39$\times 10^{-7}$  \\
				\hline
				F-AB-GMRES & K & $--$ & ($--$, 1.0) & $--$ & ($--$, 1.0) & $--$ & 
				($--$, 0.9) \\
				& & $--$ & (1.07) & $--$ & (1.33) & $--$ & (1.12) \\
				& & &$--$  & &$--$  & & $--$  \\
				\cline{2-8}
				& RK & 232 & (1223500, 1.0) & 293.5 & (1538400, 1.0) & 258.5 & 
				(1307200, 1.0) \\
				& & 209.96 & (20.25) & 259.75 & (22.10) & 223.15 & (20.85) \\
				& & &2.81$\times 10^{-5}$ & &1.13$\times 10^{-2}$  & & 2.05$\times 10^{-2}$  \\
				\cline{2-8}
				& GRK & 6 & (1710.5, 1.0) & 6 & (1761, 1.0) & 6 & (1662.5, 1.0) \\
				& & 4.57 & (1.97) & 4.88 & (2.20) & 4.58 & (2.00) \\
				& & &9.44$\times 10^{-7}$  & &8.67$\times 10^{-7}$  & & 8.74$\times 10^{-7}$  \\
				\cline{2-8}
				& GK & 22 & (7282, 1.0) & 36 & (13104, 1.0) & 25 & (8350, 1.0) \\
				& & \color{red}$\star$2.88 & (0.31) & \color{red}$\star$3.20 & 
				(0.34) & \color{red}$\star$2.91 & (0.31) \\
				& & &1.51$\times 10^{-4}$  & &9.32$\times 10^{-4}$  & & 2.05$\times 10^{-2}$ \\
				\hline
			\end{tabular}
	\end{center}}
	{\footnotesize
		\begin{center}
			\begin{tablenotes}
				\item[1] {\color{black}{The CPU time in seconds for computing matrix $C$ is given below each name of the matrix.}}
				\item[2] First row: Number of outer iterations (total number of inner iterations, $\omega$).
				\item[3] Second row: Total CPU time, which includes parameter tuning time in parentheses, in seconds.
				\item[4] Third row: Relative error norm.
			\end{tablenotes}
	\end{center}}
\end{table}

%Table~\ref{tab:4} shows that AB-GMRES preconditioned by the NE-SOR inner iterations is the fastest among all the methods for matrices RANDL3, RANDL4 and RANDL6. F-AB-GMRES preconditioned by the GK inner iterations is the fastest among all the methods for matrices RANDL1, RANDL2 and RANDL5.
{\color{black}{Table~\ref{tab:4} shows that F-AB-GMRES preconditioned by the GK inner iterations is the fastest among all the methods for the artificial random matrices RANDL$i$, $i=1,2,\ldots,6$.}}
We should remark that the time for actual execution for NE-SOR method is small, but the time for parameter tuning is large. 
%. One possible reason why F-AB-GMRES with GK is not as favorable as in the underdetermined case is that the inner iteration needs to pick one row from a large number of rows each time (more than that of underdetermined case)
%{\color{black}{We remark that F-AB-GMRES preconditioned by the GRK inner iterations sometimes does not give relative residual ${{{{\left\| {{{\boldsymbol{r}}_k}} \right\|}_2}}}/{{{{\left\| \boldsymbol{b} \right\|}_2}}} \le {10^{ - 6}}$ within the maximum number of outer iterations. In the above experiment, this happened once for the matrix RANDL3, twice for RANDL5. (See Appendix C for the results with F-ABGMRES preconditioned by the GRK inner iterations using random seeds).}}

%the GRK inner iterations is the fastest among all the methods for matrices RANDL3, and 

%We remark that F-AB-GMRES preconditioned by the GRK inner iteration does not converge within 2000 outer iterations for the matrix RANDL4, breakdown occurs because of the randomized row choice of the GRK inner iteration.

\begin{table}[!t] %table 8
	{\footnotesize
		\setlength{\tabcolsep}{3.5pt}
		\caption{Results for full-rank matrices (overdetermined problems).}\label{tab:fullrank_over}
		\begin{center}
			\begin{tabular}{ll|rrrrrr}
				\hline
				Outer & Inner & \multicolumn{2}{c}{illc1850 } & 
				\multicolumn{2}{c}{gen} & \multicolumn{2}{c}{ photogrammetry2} \\
				iteration & iteration & \multicolumn{2}{c}{0.03} & 
				\multicolumn{2}{c}{0.09} & \multicolumn{2}{c}{0.02} \\
				\hline
				AB-GMRES & NE-SOR &  260 & (1443000, 0.9) & 46 & (589030, 
				0.4) & 20 & (536640, 0.8) \\
				& & 12.92 & (0.93) & 11.45 & (3.49) & 11.98 & (6.03) \\
				& & &1.35$\times 10^{-4}$  & &1.92$\times 10^{-6}$  & & 5.86$\times 10^{-2}$  \\
				\hline
				F-AB-GMRES & K & 266 & (1312908, 0.9) & 51 & (586961, 0.4) & 23 & (525370, 
				0.7) \\
				& & 13.06 & (0.91) & 12.04 & (3.38) & 12.59 & (5.85) \\
				& & &1.40$\times 10^{-4}$  & &2.51$\times 10^{-6}$  & & 5.86$\times 10^{-2}$  \\
				\cline{2-8}
				& RK & 419& (2161000, 1.1) & 13 & (130256, 1.1) & 29.5 & (239953, 
				1.2) \\
				& & 125.69 & (8.09) & 30.47 & (21.00) & 49.78 & (22.56) \\
				& & &3.37$\times 10^{-4}$ & &2.17$\times 10^{-6}$  & & 5.86$\times 10^{-2}$  \\
				\cline{2-8}
				& GRK & 409.5 & (364010, 1.1) & 6 & (4558, 1.3) & 20 & (25409, 1.4) \\
				& & 26.98 & (1.84) & 2.81 & (2.28) & 9.77 & (5.55) \\
				& & &1.17$\times 10^{-4}$  & &1.26$\times 10^{-6}$  & & 5.86$\times 10^{-2}$  \\
				\cline{2-8}
				& GK & 413 & (381596, 1.1) & 10 & (14899, 1.1) & 21 & (23718, 1.3) \\
				& & \color{red}$\star$5.21 & (0.25) & \color{red}$\star$0.95 & 
				(0.57) & \color{red}$\star$0.91 & (0.43) \\
				& & &2.52$\times 10^{-4}$ & &2.19$\times 10^{-6}$  & & 5.86$\times 10^{-2}$  \\
				\hline
			\end{tabular}
	\end{center}}
	{\footnotesize
		\begin{center}
			\begin{tablenotes}
				\item[1] {\color{black}{The CPU time in seconds for computing matrix $C$ is given below each name of the matrix.}}
				\item[2] First row: Number of outer iterations (total number of inner iterations, $\omega$).
				\item[3] Second row: Total CPU time, which includes parameter tuning time in parentheses, in seconds.
				\item[4] Third row: Relative error norm.
			\end{tablenotes}
	\end{center}}
\end{table}

Table~\ref{tab:fullrank_over} shows that F-AB-GMRES preconditioned by the GK method is the fastest regarding the CPU time among all the methods for matrices illc1850, gen and photogrammetry2.

In Figure~\ref{fig:fullrank_over}, we plot the relative residual norm ${{{{\left\| {{{\boldsymbol{r}}_k}} \right\|}_2}}}/{{{{\left\| \boldsymbol{b} \right\|}_2}}}$ versus the total number of inner iterations and CPU time in seconds for the matrix photogrammetry2. 
%Figure~\ref{fig:fullrank_over} shows that F-AB-GMRES preconditioned by the GRK inner iterations is best among all the methods when comparing the total number of inner iterations, and the GK inner iterations is best among all the methods regarding the CPU time for the matrix photogrammetry2. 
{\color{black}{Figure~\ref{fig:fullrank_over} shows that F-AB-GMRES preconditioned by the GK inner iterations is best among all the methods regarding the total number of inner iterations and CPU time for the matrix photogrammetry2.}}
These results are in accordance with Table~\ref{tab:fullrank_over}.

\begin{figure}[!t] %figure 4
	\centering
	\begin{minipage}[t]{0.46\linewidth}
		\centering
		\includegraphics[width=2.3in]{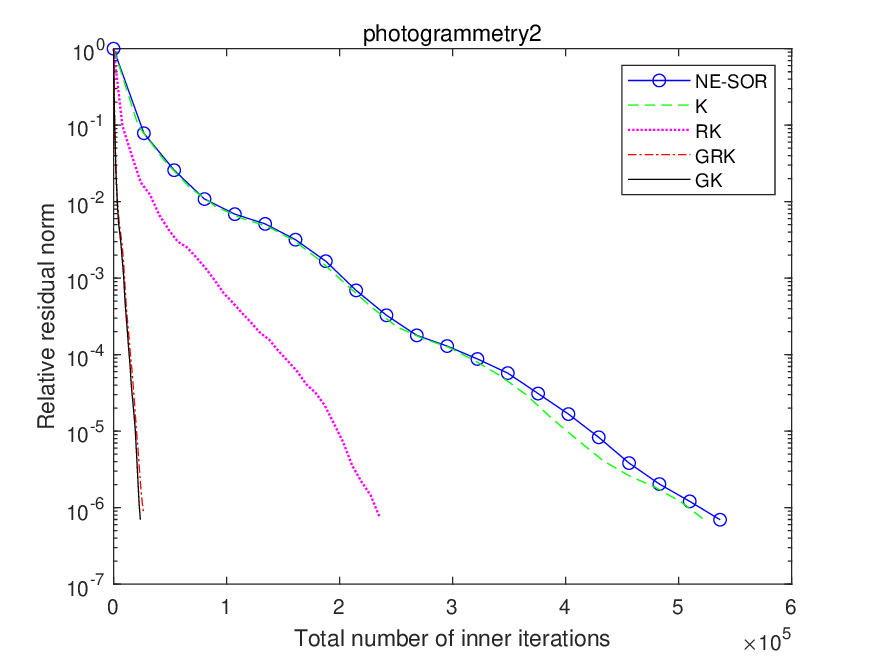}
	\end{minipage}
	\begin{minipage}[t]{0.46\linewidth}
		\centering
		\includegraphics[width=2.3in]{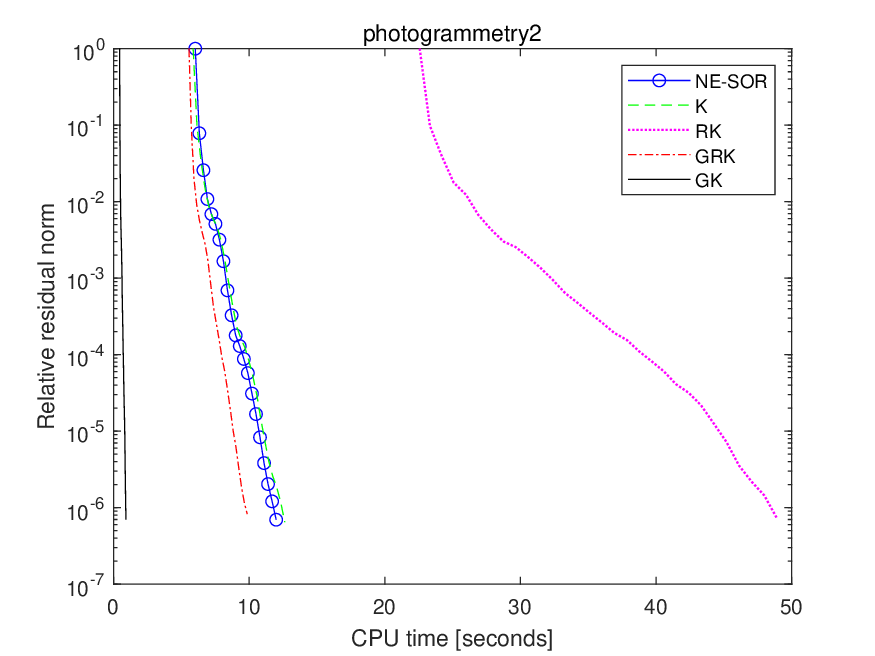}
	\end{minipage}
	\caption{Relative residual norm ${{{{\left\| {{{\boldsymbol{r}}_k}} \right\|}_2}}}/{{{{\left\| \boldsymbol{b} \right\|}_2}}}$ vs.\ total number of inner iterations (left) and relative residual norm ${{{{\left\| {{{\boldsymbol{r}}_k}} \right\|}_2}}}/{{{{\left\| \boldsymbol{b} \right\|}_2}}}$ vs.\ CPU time (right) for photogrammetry2.}
	\label{fig:fullrank_over}
\end{figure}

Table~\ref{tab:5} shows that F-AB-GMRES preconditioned by
the GK method is the fastest regarding the CPU time among all the methods for matrices Maragal$\_$3, Maragal$\_$4 and Maragal$\_$5.

\begin{table}[!t] %table 9
	{\footnotesize
		\setlength{\tabcolsep}{3.5pt}
		\caption{Results for rank-deficient matrices (overdetermined problems).}\label{tab:5}
		\begin{center}
			\begin{tabular}{ll|rrrrrr}
				\hline
				Outer & Inner & \multicolumn{2}{c}{Maragal$\_$3 } & 
				\multicolumn{2}{c}{Maragal$\_$4 } & \multicolumn{2}{c}{Maragal$\_$5 } \\
				iteration & iteration & \multicolumn{2}{c}{0.02} & 
				\multicolumn{2}{c}{0.02} & \multicolumn{2}{c}{0.06} \\
				\hline
				AB-GMRES & NE-SOR &  149 & (751854, 1.1) & 97 & (571524, 
				1.1) & 330 & (4607460, 1.1) \\
				& & 8.35 & (1.03) & 6.80 & (1.18) & 81.49 & (4.69) \\
				& & &1.51$\times 10^{-5}$  & &1.59$\times 10^{-2}$  & & 9.14$\times 10^{-4}$ \\
				\hline
				F-AB-GMRES & K & 172 & (630654, 1.1) & 127 & (495173, 1.0) & 397 & (4115302, 
				1.0) \\
				& & 7.54 & (0.78) & 6.19 & (0.83) & 81.34 & (3.85) \\
				& & &4.94$\times 10^{-5}$  & &1.59$\times 10^{-2}$  & & 9.14$\times 10^{-4}$  \\
				\cline{2-8}
				& RK & 243.5 & (1152207, 1.1) & 143.5 & (870448, 1.1) & 466.5 & (8306272, 
				1.1) \\
				& & 70.53 & (7.50) & 59.16 & (9.95) & 1119.70 & (62.89) \\
				& & &7.12$\times 10^{-5}$  & &1.59$\times 10^{-2}$  & & 1.55$\times 10^{-2}$  \\
				\cline{2-8}
				& GRK & 231 & (125664, 1.1) &144.5 & (114100, 1.1) & 475.5 & (986660, 1.0) \\
				& & 9.71 & (1.10) &9.92 & (1.66) & 184.85 & (10.45) \\
				& & &6.10$\times 10^{-5}$  & &1.59$\times 10^{-2}$  & & 7.10$\times 10^{-3}$ \\
				\cline{2-8}
				& GK & 250 & (159496, 1.0) & 136 & (116784, 1.1) & 464 & (1085222, 1.1) \\
				& & \color{red}$\star$2.58 & (0.19) & \color{red}$\star$2.06 & 
				(0.26) & \color{red}$\star$33.25 & (1.29) \\
				& & &6.21$\times 10^{-5}$  & &1.59$\times 10^{-2}$  & & 3.70$\times 10^{-3}$  \\
				\hline
			\end{tabular}
	\end{center}}
	{\footnotesize
		\begin{center}
			\begin{tablenotes}
				\item[1] {\color{black}{The CPU time in seconds for computing matrix $C$ is given below each name of the matrix.}}
				\item[2] First row: Number of outer iterations (total number of inner iterations, $\omega$).
				\item[3] Second row: Total CPU time, which includes parameter tuning time in parentheses, in seconds.
				\item[4] Third row: Relative error norm.
			\end{tablenotes}
	\end{center}}
\end{table}

\begin{figure}[!t] %figure 5
	\centering
	\begin{minipage}[t]{0.46\linewidth}
		\centering
		\includegraphics[width=2.3in]{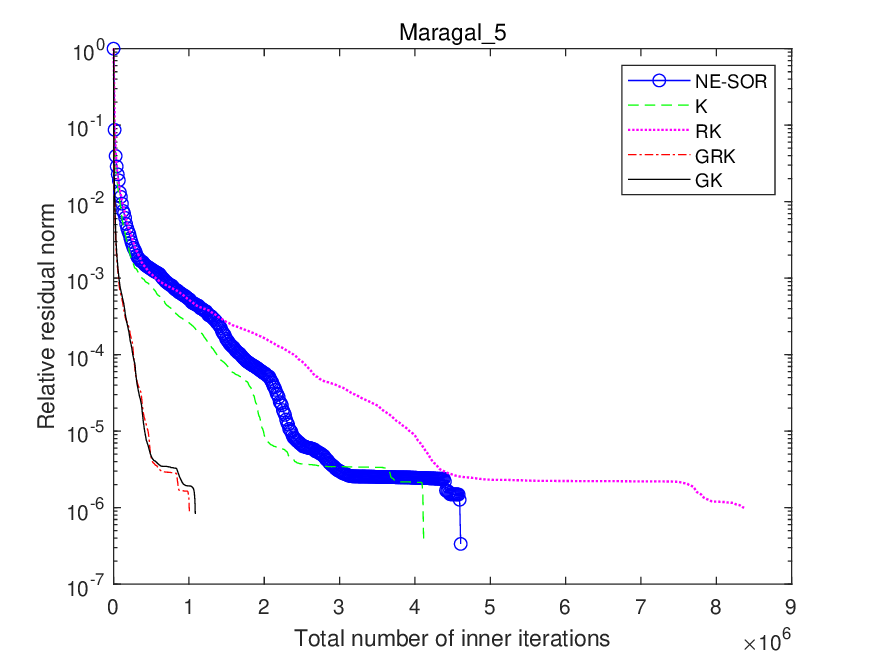}
	\end{minipage}
	\begin{minipage}[t]{0.46\linewidth}
		\centering
		\includegraphics[width=2.3in]{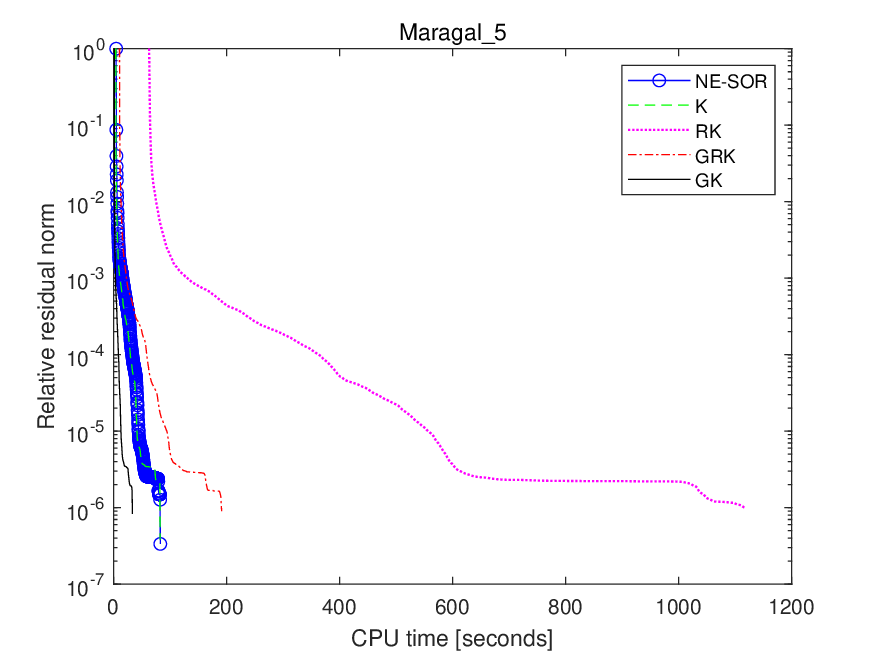}
	\end{minipage}
	\caption{Relative residual norm ${{{{\left\| {{{\boldsymbol{r}}_k}} \right\|}_2}}}/{{{{\left\| \boldsymbol{b} \right\|}_2}}}$ vs.\ total number of inner iterations (left) and relative residual norm ${{{{\left\| {{{\boldsymbol{r}}_k}} \right\|}_2}}}/{{{{\left\| \boldsymbol{b} \right\|}_2}}}$ vs.\ CPU time (right) for Maragal$\_$5.}
	\label{fig:4}
\end{figure}

In Figure~\ref{fig:4}, we plot the relative residual norm ${{{{\left\| {{{\boldsymbol{r}}_k}} \right\|}_2}}}/{{{{\left\| \boldsymbol{b} \right\|}_2}}}$ versus the total number of inner iterations and CPU time in seconds for the matrix Maragal$\_$5. Figure~\ref{fig:4} shows that F-AB-GMRES preconditioned by the GRK inner iterations is best among all the methods when comparing the total number of inner iterations, and the GK inner iterations is the fastest among all the methods regarding the CPU time for the matrix Maragal$\_$5. These results are in accordance with Table~\ref{tab:5}.

We have tried to further speed up the methods based on the Kaczmarz-type inner iterations by computing approximations of $AA^\mathsf{T}$ using the method in~\cite{HI15} for over- and underdetermined systems, but so far we have not been successful, and this is left for future research.

\subsection{Inconsistent problems}

In order to test our method for inconsistent problems, we 
{\color{black}{let $\boldsymbol{b}=A\boldsymbol{x}_{\star}$ and add noise to $\boldsymbol{b}$ to obtain the right-hand side $\tilde{\boldsymbol{b}}=[\tilde{b}_1,\ldots,\tilde{b}_m]^\mathsf{T}$ by letting
		$$\tilde{b}_i = b_i \cdot (1 + \epsilon \cdot \mu_i), \quad i=1,\ldots,m.$$
		Different values $\epsilon=10^{-3}$, $10^{-2}$ and $10^{-1}$ were used for $\epsilon$.
		The scalars $\mu_i~(i=1,\ldots,m)$ were generated randomly in the interval (-1,1) using the MATLAB function \texttt{rand}.}}

\begin{figure}[!t] %figure 6
	\centering
	\begin{minipage}[t]{0.46\linewidth}
		\centering
		\includegraphics[width=2.3in]{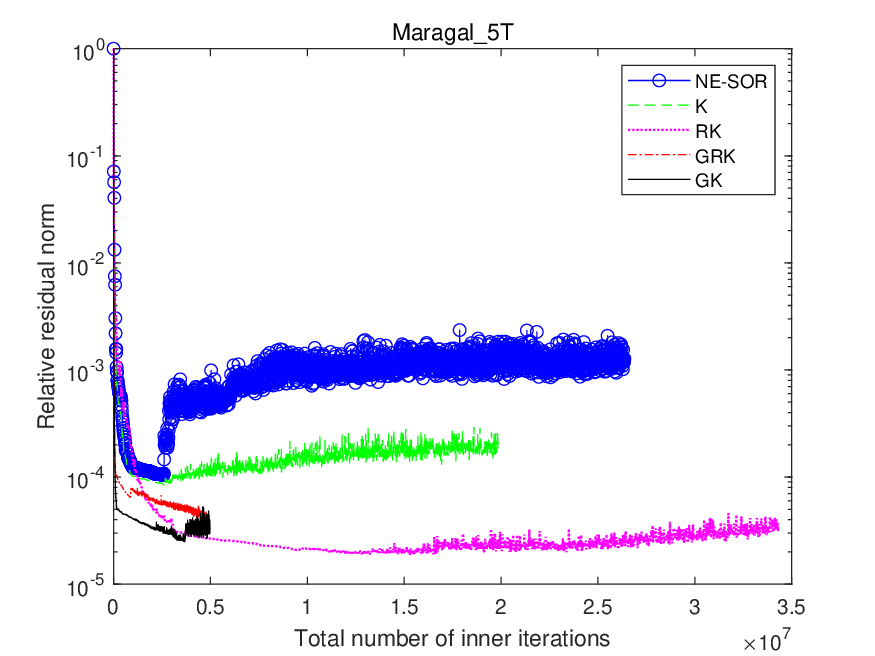}
	\end{minipage}
	\begin{minipage}[t]{0.46\linewidth}
		\centering
		\includegraphics[width=2.3in]{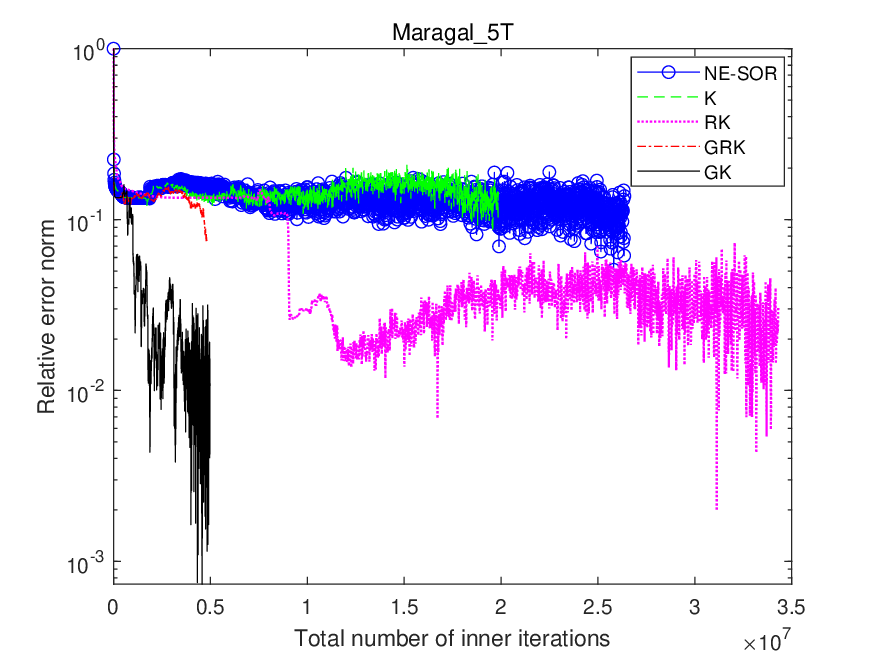}
	\end{minipage}
	\caption{Relative residual norm ${{{{\| {{{A^\mathsf{T}\boldsymbol{r}}_k}} \|}_2}}}/{{{{\| A^\mathsf{T}{\tilde{\boldsymbol{b}}} \|}_2}}}$ vs.\ total number of inner iterations (left) and relative error norm ${\left\| {{\boldsymbol{x}_k} - {\boldsymbol{x}_ *  }} \right\|_2} / {\left\| {{\boldsymbol{x}_ * }} \right\|_2}$ vs.\ total number of inner iterations (right) for Maragal$\_$5T for $\epsilon=10^{-3}$.}
	\label{fig:5}
\end{figure}

\begin{figure}[!t] %figure 6
	\centering
	\begin{minipage}[t]{0.46\linewidth}
		\centering
		\includegraphics[width=2.3in]{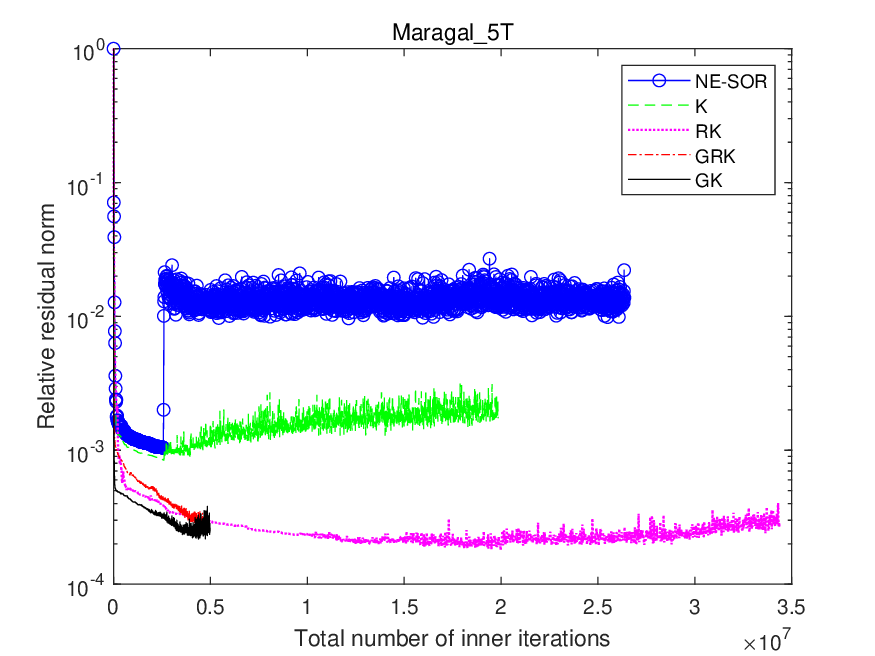}
	\end{minipage}
	\begin{minipage}[t]{0.46\linewidth}
		\centering
		\includegraphics[width=2.3in]{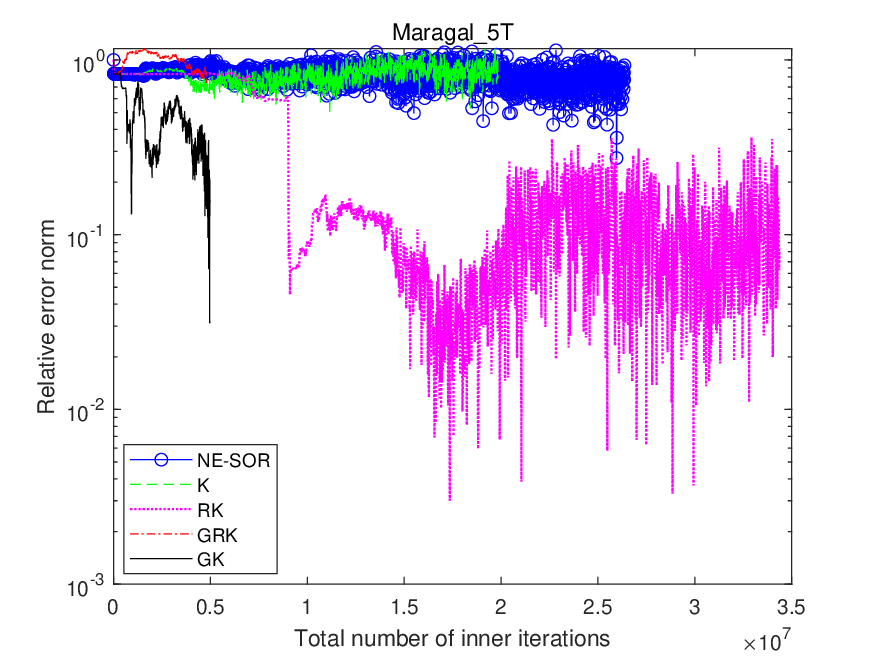}
	\end{minipage}
	\caption{Relative residual norm ${{{{\| {{{A^\mathsf{T}\boldsymbol{r}}_k}} \|}_2}}}/{{{{\| A^\mathsf{T}{\tilde{\boldsymbol{b}}} \|}_2}}}$ vs.\ total number of inner iterations (left) and relative error norm ${\left\| {{\boldsymbol{x}_k} - {\boldsymbol{x}_ *  }} \right\|_2} / {\left\| {{\boldsymbol{x}_ * }} \right\|_2}$ vs.\ total number of inner iterations (right) for Maragal$\_$5T for $\epsilon=10^{-2}$.}
	\label{fig:6}
\end{figure}

\begin{figure}[!t] %figure 6
	\centering
	\begin{minipage}[t]{0.46\linewidth}
		\centering
		\includegraphics[width=2.3in]{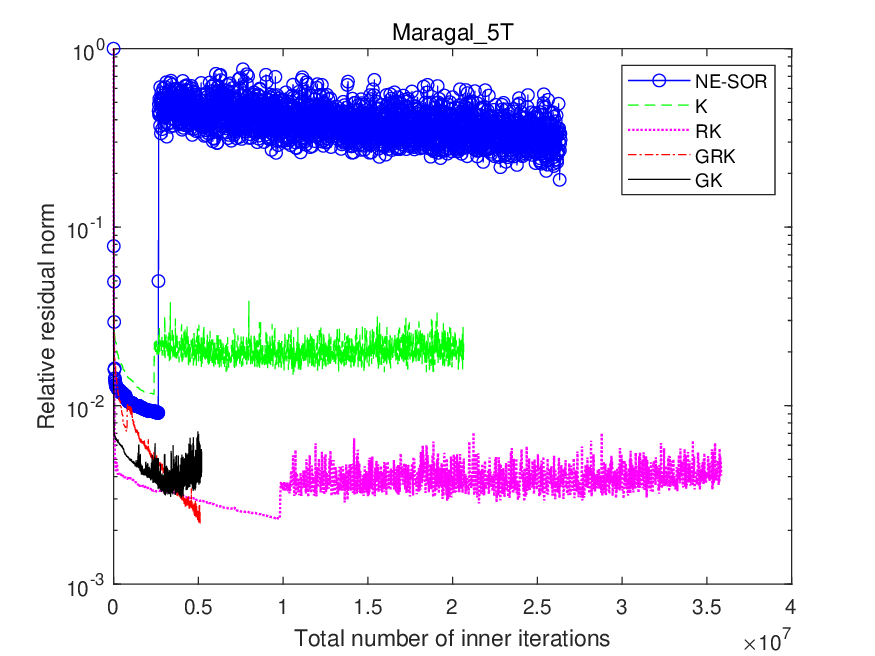}
	\end{minipage}
	\begin{minipage}[t]{0.46\linewidth}
		\centering
		\includegraphics[width=2.3in]{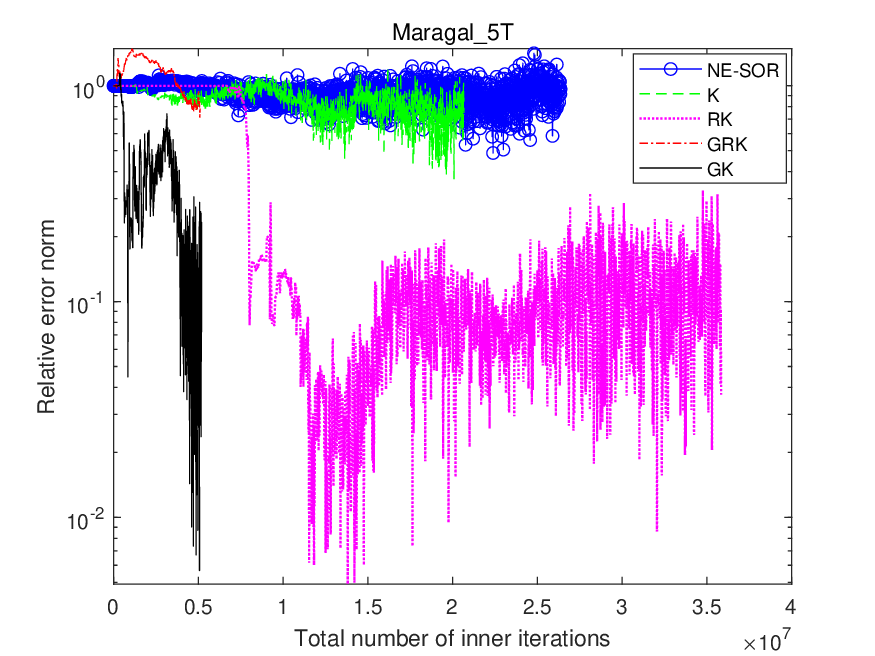}
	\end{minipage}
	\caption{Relative residual norm ${{{{\| {{{A^\mathsf{T}\boldsymbol{r}}_k}} \|}_2}}}/{{{{\| A^\mathsf{T}{\tilde{\boldsymbol{b}}} \|}_2}}}$ vs.\ total number of inner iterations (left) and relative error norm ${\left\| {{\boldsymbol{x}_k} - {\boldsymbol{x}_ *  }} \right\|_2} / {\left\| {{\boldsymbol{x}_ * }} \right\|_2}$ vs.\ total number of inner iterations (right) for Maragal$\_$5T for $\epsilon=10^{-1}$.}
	\label{fig:7}
\end{figure}

In Figure{\color{black}s}~\ref{fig:5}{\color{black},\ref{fig:6} and~\ref{fig:7}}, we plot the relative residual norm ${{{{\| {{{A^\mathsf{T}\boldsymbol{r}}_k}} \|}_2}}}/{{{{\| A^\mathsf{T}{\color{black}{\tilde{\boldsymbol{b}}}} \|}_2}}}$ for the normal equations {\color{black}{where $\boldsymbol{r}_k=\tilde{\boldsymbol{b}}-A{\boldsymbol{x}_k}$}} and relative error norm ${\left\| {{\boldsymbol{x}_k} - {\boldsymbol{x}_ *  }} \right\|_2} / {\left\| {{\boldsymbol{x}_ * }} \right\|_2}$ {\color{black}{where $x_*=A^\dag \tilde{\boldsymbol{b}}$}} versus the total number of inner iterations for the matrix Maragal$\_$5T {\color{black}for $\epsilon=10^{-3}$, $10^{-2}$ and $10^{-1}$,} repectively. Figure{\color{black}s}~\ref{fig:5}{\color{black},\ref{fig:6} and~\ref{fig:7}} show that all the methods {\color{black}do} not converge for this inconsistent problem and {\color{black}as the value of $\epsilon$ increases, the smallest residual norm 
	increases for each method.}

\section{Conclusion}
In this paper, we proposed replacing the NE-SOR method by Kaczmarz-type methods in the previous AB-GMRES method preconditioned by stationary inner iterations for solving consistent systems of linear equations.
To do so, we developed a new algorithm called flexible AB-GMRES method preconditioned by Kaczmarz-type methods as inner iterations.
An optimality property of minimizing residuals was given for the proposed method.
We also proposed a tuning procedure for adjusting the maximum number of inner iterations and value of the relaxation parameter in the method.
Numerical experiment results showed that flexible AB-GMRES preconditioned by Kaczmarz-type methods converge faster than the previous method in terms of total CPU time.

\section*{Appendix A} \textbf{Comparison of computational work of GK and modified GK inner-iteration preconditioning.}

We compare the total computational work (the number of floating-point operations, where we count a pair of addition and multiplication as one floating point operation) for the greedy Kaczmarz inner-iteration preconditioning using Algorithm~\ref{The Greedy Kaczmarz Method} (GK), with GK modified by precomputing and storing $C=AA^\mathsf{T}$ once beforehand and updating the residual vector $\boldsymbol{s}$ using equation~\eqref{r} (MGK).

Let $A$ be an $m \times n$ matrix. Assume that the number of outer iterations of the F-AB-GMRES is $k$, and that the number of inner Kaczmarz iterations is fixed at $\ell$ for each outer iteration.

First, consider the case when $A$ is dense. Then, the total work for GK is given by
\begin{equation*}
w_{\text{GK}}^\text{d} = k \ell (mn+n+m).
\end{equation*}
The first term corresponds to step 3, the second to step 4 of Algorithm~\ref{The Greedy Kaczmarz Method}, and the third to the residual norm computation in step 4 of Algorithm~\ref{AGPK}, respectively.
The total work for MGK is
\begin{equation*}
w_{\text{MGK}}^\text{d} = m^2 n + k \ell (2m+n).
\end{equation*}
The first term is for computing  $C=AA^\mathsf{T}$ once beforehand, the second for the update in step 3 of Algorithm~\ref{The Greedy Kaczmarz Method} using equation~\eqref{r} and step 4 of Algorithm~\ref{AGPK}, and the third for step 4 of Algorithm~\ref{The Greedy Kaczmarz Method}, respectively.
Hence,
\begin{equation*}
w_{\text{GK}}^\text{d} - w_{\text{MGK}}^\text{d} = m \left[ k \ell (n-1) - mn \right].
\end{equation*}
Therefore, 
\begin{equation}\label{wd}
w_{\text{MGK}}^\text{d}  < w_{\text{GK}}^\text{d} \quad \Longleftrightarrow \quad k \ell > m \left( 1 + \frac{1}{n-1} \right). 
\end{equation}

Next, consider the case when $A$ is sparse and the position of the nonzero elements are random. Let $\mathrm{nz}$ be the number of nonzero elements of $A$. Define ${\displaystyle q= \mathrm{nz}/m}$ as the average number of nonzero elements per row of $A$. Thus, the density of $A$ is $d={\displaystyle q/n}$. Assume that the computational work to compute $C = (c_{ij}) = AA^\mathsf{T}$ is $m^2q$.
Let the density of $C$ be $p$.

Then, the total work for GK is
\begin{equation*}
w_{\text{GK}}^\text{s} = k \ell (\mathrm{nz} + q +m) = k \ell (qm+q+m).
\end{equation*}
The first term is for step 3, the second for step 4 of Algorithm~\ref{The Greedy Kaczmarz Method}, and the third for step 4 of Algorithm~\ref{AGPK}, respectively.
The total work for MGK is
\begin{equation*}
w_{\text{MGK}}^\text{s} = m^2q+ k \ell \left(q+mp+m\right).
\end{equation*}
The first term is for computing $AA^\mathsf{T}$, the second for step 4 of Algorithm~\ref{The Greedy Kaczmarz Method}, the third for the update in equation~\eqref{r}, and the fourth for step 4 of Algorithm~\ref{AGPK}, respectively.  
Hence,
\begin{equation*}
w_{\text{GK}}^\text{s} - w_{\text{MGK}}^\text{s} = m\left[k \ell \left(q-p\right)  -mq\right].
\end{equation*}
Therefore,
\begin{equation}\label{ws}
w_{\text{MGK}}^\text{s}  <  w_{\text{GK}}^\text{s} \quad \Longleftrightarrow \quad k \ell > m\left(1 + \frac{p}{q-p}\right).
\end{equation}

The density $p$ of $C=AA^\mathsf{T}$ can be estimated as follows. The probability that $c_{ij} \ne 0$ for $i \ne j$ is ${p_{\text{nd}}} = 1 - {\left(1 - {d^2}\right)^n}$, and the probability that $c_{ii} \ne 0$ is ${p_\text{d}} = 1 - {(1 - d)^n}$. 
Here, the estimation of the probability ${p_{\text{nd}}}$ is based only on the probability (density) $d=q/n$ of an element of $A$ being nonzero, and not on its numerical value, so we have not taken into account the case when the rows of $A$ are orthogonal, which can be considered to be generically negligible.

Therefore the probability that $c_{ij} \ne 0$ (or the density of $C$) is
\begin{align} \label{p}
p &= \frac{{\left( {{m^2} - m} \right){p_{\text{nd}}} + m{p_\text{d}}}}{{{m^2}}} \nonumber\\
&= 1 - \left( {1 - \frac{1}{m}} \right){\left( {1 - {d^2}} \right)^n} - \frac{1}{m}{\left( {1 - d} \right)^n}.
\end{align}
If $d=1$ ($A$ is dense),~\eqref{p} implies $p=1$. Then, also $q=n$, so that~\eqref{ws} agrees with~\eqref{wd}.
If $d \ll 1$, $p$ can be approximated as 
\begin{equation*}
p \sim 1 - \left( {1 - \frac{1}{m}} \right){e^{ - n{d^2}}} - \frac{1}{m}{e^{ - q}}.
\end{equation*}

Table~\ref{tab:6} gives estimated (using~\eqref{p}) and actual values of the density of $C=AA^\mathsf{T}$ for the matrices used in our experiments.
The estimation captures the trend of the actual density qualitatively.

\begin{table}[!t]
	\caption{Estimated and actual densities of the matrix $C=AA^\mathsf{T}$.}\label{tab:6}
	\begin{center}
		\begin{tabular}{rrrrrr}
			\hline
			\multicolumn{1}{c}{matrix} & \multicolumn{1}{c}{$m$} & \multicolumn{1}{c}{$n$} &\multicolumn{1}{c}{$d$} & \multicolumn{1}{c}{$p(\text{estimated})$} & \multicolumn{1}{c}{$p(\text{actual})$} \\
			\hline
			RANDL1 & 5000 & 500 & 0.2 & 1.00 & 0.745 \\
			RANDL2 & 5000 & 500 & 0.2 & 1.00 & 0.730\\
			RANDL3 & 5000 & 500 & 0.2 & 1.00 & 0.731\\
			RANDL4 & 5000 & 500 & 0.2 & 1.00 & 0.714\\
			RANDL5 & 5000 & 500 & 0.2 & 1.00 & 0.719\\
			RANDL6 & 5000 & 500 & 0.2 & 1.00 & 0.719\\
			illc1850&1850&712& 6.60$\times 10^{-3}$&0.0307&0.153\\
			gen&2561&769& 3.20$\times 10^{-2}$&0.546&0.556\\
			photogrammetry2&4472&936& 8.90$\times 10^{-3}$&0.0709&0.0321\\
			Maragal$\_$3 & 1682 & 858 & 1.27$\times 10^{-2}$ & 0.131& 0.160\\
			Maragal$\_$4 & 1964 & 1027 & 1.32$\times 10^{-2}$ & 0.165 & 0.126\\
			Maragal$\_$5 & 4654& 3296 & 6.10$\times 10^{-3}$ & 0.115 &  0.0728\\
			RANDL1T  & 500 & 5000 & 0.2 & 1.00 & 0.940 \\
			RANDL2T  & 500 & 5000 & 0.2 & 1.00 & 0.950\\
			RANDL3T  & 500 & 5000 & 0.2 & 1.00 & 0.926\\
			RANDL4T  & 500 & 5000 & 0.2 & 1.00 & 0.927\\
			RANDL5T  & 500 & 5000 & 0.2 & 1.00 & 0.932\\
			RANDL6T  & 500 & 5000 & 0.2 & 1.00 & 0.932\\
			Maragal$\_$3T  & 858 & 1682 & 1.27$\times 10^{-2}$ & 0.240 & 0.562\\
			Maragal$\_$4T  & 1027 & 1964 & 1.32$\times 10^{-2}$ & 0.292 & 0.669\\
			Maragal$\_$5T & 3296 & 4654 & 6.10$\times 10^{-3}$ & 0.158 & 0.461\\
			illc1850T&712&1850& 6.60$\times 10^{-3}$&0.0777&0.0179\\
			genT&769&2561& 3.20$\times 10^{-2}$&0.928&0.646\\
			photogrammetry2T&936&4472& 8.90$\times 10^{-3}$&0.296&0.0944\\
			\hline
		\end{tabular}
		\begin{tablenotes}
			$m$: number of rows of $A$, $n$: number of columns of $A$, 
			$d$: density of nonzero elements of $A$, $p$: density of nonzero elements of $C=AA^\mathsf{T}$.
		\end{tablenotes}
	\end{center}
\end{table}

As for the CPU time, the computation of $AA^\mathsf{T}$ should perform relatively more efficiently than the flops count suggests, especially for the dense case, due to fast memory access.

\section*{Appendix B} \textbf{Equivalence between \eqref{l2} and \eqref{n2}}.

Necessity:
Let $f\left( \boldsymbol{x} \right) = \frac{1}{2}\left\| \boldsymbol{x} \right\|_2^2 + {\boldsymbol{\lambda} ^\mathsf{T}}\left( {\boldsymbol{b} - A\boldsymbol{x}} \right)$ be the Lagrange function, where $\boldsymbol{\lambda}\in {\mathbb{R}^m}$ is the Lagrange multiplier.
Since
\begin{align*}
\frac{{\partial f\left( \boldsymbol{x} \right)}}{{\partial {x_i}}} &= {x_i} - \boldsymbol{a} _i^\mathsf{T}\boldsymbol{\lambda}, \quad i=1,2,\ldots,n, \\
\frac{{\partial f\left( \boldsymbol{x} \right)}}{{\partial {\lambda _i}}} &= {b_i} - \boldsymbol{\alpha} _i^\mathsf{T}\boldsymbol{x}, \quad i=1,2,\ldots,m,
\end{align*}
where $\boldsymbol{a}_i$ is the $i$th column of $A$ and $\boldsymbol{\alpha} _i^\mathsf{T}$ is the $i$th row of $A$, we have
\begin{align*}
\frac{{\partial f\left( \boldsymbol{x} \right)}}{{\partial \boldsymbol{x}}} &= 0 \quad \Longleftrightarrow \quad \boldsymbol{x} = {A^\mathsf{T}}\boldsymbol{\lambda}, \\
\frac{{\partial f\left( \boldsymbol{x} \right)}}{{\partial \boldsymbol{\lambda} }} &= 0 \quad \Longleftrightarrow \quad A\boldsymbol{x} = \boldsymbol{b}.
\end{align*}
Hence, the solution of \eqref{l2} satisfies \eqref{n2}.

Sufficiency:
Let $\boldsymbol{x}=\boldsymbol{x}_1+\boldsymbol{x}_2$, where ${\boldsymbol{x}_1} \in \mathcal{N}{(A)^ \bot } = \mathcal{R}({A^\mathsf{T}})$ and ${\boldsymbol{x}_2} \in \mathcal{N}(A)$.
Let $\boldsymbol{x}_1={A^\mathsf{T}}\boldsymbol{u}$. Then, we have $A{\boldsymbol{x}_1} = A\boldsymbol{x} = \boldsymbol{b}$.
If $\boldsymbol{y}\in {\mathbb{R}^n}$ satisfies $A\boldsymbol{y}=\boldsymbol{b}$, then we have $A(\boldsymbol{y}-\boldsymbol{x}_1)=0$. Let $\boldsymbol{y}-\boldsymbol{x}_1=\boldsymbol{t} \in \mathcal{N}(A)$. Then, we have $\boldsymbol{y}=\boldsymbol{x}_1+\boldsymbol{t}$, where ${\boldsymbol{x}_1} \in \mathcal{N}{(A)^ \bot }$ and $\boldsymbol{t} \in \mathcal{N}(A)$. Since 
$${\left\| \boldsymbol{y} \right\|^2} = {\left\| {{\boldsymbol{x}_1}} \right\|^2} + {\left\| \boldsymbol{t} \right\|^2} \ge {\left\| {{\boldsymbol{x}_1}} \right\|^2},$$
there exists ${\boldsymbol{x}_1} = {A^\mathsf{T}}\boldsymbol{u}$, where ${\boldsymbol{x}_1} \in \arg \{ {\min {{\left\| \boldsymbol{x} \right\|}^2}, A\boldsymbol{x} = \boldsymbol{b}} \}$.

If ${\boldsymbol{x}_1} = {A^\mathsf{T}}{\boldsymbol{u}_1}$, $A\boldsymbol{x}_1=\boldsymbol{b}$, and ${\boldsymbol{x}_2} = {A^\mathsf{T}}{\boldsymbol{u}_2}$, $A\boldsymbol{x}_2=\boldsymbol{b}$, we have ${\boldsymbol{x}_2} - {\boldsymbol{x}_1} \in \mathcal{R}({A^\mathsf{T}}) = \mathcal{N}{(A)^ \bot }$ and $A(\boldsymbol{x}_2-\boldsymbol{x}_1)=0$, we have ${\boldsymbol{x}_2} - {\boldsymbol{x}_1} \in \mathcal{N}(A)$. Hence, ${\boldsymbol{x}_2} - {\boldsymbol{x}_1} \in \mathcal{N}{(A)^ \bot } \cap \mathcal{N}(A) = \{{\boldsymbol {0}}\}$. Thus, $\boldsymbol{x}_1=\boldsymbol{x}_2.$

\section*{Acknowledgement}
We would like to thank Professor Zhong-Zhi Bai for stimulating discussions and valuable remarks. 
We would also like to thank the referees for their valuable comments.

\bibliographystyle{siamplain}
\bibliography{references}

\end{document}